\numberwithin{equation}{section}
\DeclarePairedDelimiter\floor{\lfloor}{\rfloor}
\newcommand{\Id}{\text{Id}}
\newcommand{\R}{\mathbb{R}}
\newcommand{\N}{\mathbb{N}}
\newcommand{\C}{\mathbb{C}}
\newcommand{\Z}{\mathbb{Z}}
\newtheorem{theorem}{Theorem}[section]
\newtheorem{corollary}[theorem]{Corollary}
\newtheorem{Claim}[theorem]{Claim}
\newtheorem{Lemma}[theorem]{Lemma}
\newtheorem{Definition}[theorem]{Definition}
\newtheorem{Proposition}[theorem]{Proposition}
\title{The Return Map of the Cross Section of Horizontally Short Lattice Surfaces is Weakly Mixing}
\author{Albert Artiles}
\date{\today}
\begin{document}

\maketitle

\begin{abstract}
\noindent We prove that the return map of the unstable horocycle flow on the space of horizontally short translation surfaces associated to a lattice surface 
$(X,\omega)$ is weakly mixing. This extends the result of Cheung–Quas for the square torus to all lattice surfaces. The proof adapts their criterion for weak mixing and uses quantitative bounds for Siegel–Veech transforms restricted to the Poincaré section of horizontally short surfaces.
\end{abstract}

\section{Introduction}

The classical Boca--Cobeli--Zaharescu (BCZ) map is the transformation $T:\Omega \to \Omega$ defined by
\[
T(x,y) = \bigl(y, -x + \lfloor \tfrac{1+x}{y} \rfloor y \bigr),
\]
where $\Omega$ is the region of the plane given by $0<x\leq 1$, $0<y\leq 1$, and $x+y>1$. This map was introduced by Boca-Cobeli-Zaharescu in \cite{BocCobZah} to study asymptotic properties related to the distribution of $h$-spacings of Farey fractions. Independently, Athreya and Cheung \cite{AthChe} rediscovered the map while analyzing the dynamics of the unstable horocycle flow the space of unimodular lattices of $\R^2$.

A \emph{horizontally short lattice} is a unimodular lattice $g\mathbb{Z}^2$ with $g \in SL(2,\mathbb{R})$ such that $g\mathbb{Z}^2 \cap \{(x,0)\in \R^2 : 0 < x \leq 1\} \neq \varnothing$. Athreya-Cheung identified the BCZ map as the return map of the unstable horocycle flow on the set of horizontally short lattices. Subsequently, Athreya--Chaika--Leli\`evre \cite{AthChaLel}, Uyanik--Work \cite{UyaWor}, Taha \cite{Tah}, and Sanchez \cite{San} extended this construction to the setting of translation surfaces and their set of holonomy vectors.

Let $(X,\omega)$ be a lattice surface with Veech group $\Gamma$ and a horizontal saddle connection of length 1 and no shorter horizontal saddle connections. In this paper, we assume that $SL(2,\R)/\Gamma$ has only one cusp and that $-\Id\in \Gamma$, to simplify the computations, but all the results still hold as long as $SL(2,\R)/\Gamma$ has finitely many cusps. Let $\Lambda^*$ be the collection of holonomy vectors of $(X,\omega)$ and define $\Lambda$ to be the set of visible elements of $\Lambda^*$. We define the set of horizontally short translation surface in the $SL(2,\R)$-orbit of $(X,\omega)$ as $\mathcal{L}=\{g\Gamma\in SL(2,\R)/\Gamma: g\Lambda\cap\{(x,0)\in\R^2: 0<x\leq 1\}\neq \varnothing\}.$

Athreya \cite{Ath} showed that $\mathcal{L}$ is a Poincaré section for the unstable horocycle flow on $SL(2,\R)/\Gamma$. Let $T=T_\mathcal{L}:\mathcal{L}\rightarrow\mathcal{L}$ denote the return map under the unstable horocycle flow and $m$ denote the induced probability measure on $\mathcal{L}$. Our main theorem is the following:

\begin{theorem}\label{Thm: Main}
The dynamical system $(\mathcal{L}, T, m)$ is weakly mixing.
\end{theorem}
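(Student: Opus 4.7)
The plan is to follow the strategy of Cheung--Quas, who reduced weak mixing of the BCZ map to a quantitative non-existence statement for measurable eigenfunctions, and then to supply the inputs required for that reduction in the general lattice-surface setting. The first step is to obtain an explicit piecewise-algebraic description of $T$ in suitable coordinates on $\mathcal{L}$: each point is parametrized by the length $x\in(0,1]$ of its distinguished short horizontal saddle connection together with a secondary parameter $y$ recording the ``next'' relevant holonomy vector, generalizing the Athreya--Cheung coordinates. The return time $\tau$ is then a simple rational function of these coordinates and $T$ has a BCZ-like form, now twisted by the Veech group $\Gamma$.

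Next, I would invoke the Cheung--Quas criterion in the form: if $f\in L^2(\mathcal{L},m)$ satisfies $f\circ T=e^{2\pi i\theta}f$ for some $\theta\in\R\setminus\Z$, then the Birkhoff sums of $\theta\tau$ over $T$-orbits must fail to equidistribute modulo $1$ in a precise quantitative way, and this can be ruled out provided one has sufficiently strong $L^p$ control on Siegel--Veech transforms associated to $\Lambda$ along the cross-section. Concretely, for a suitable bounded compactly supported $\phi:\R^2\to\R_{\geq 0}$ one needs estimates of the form
\[
\int_{\mathcal{L}}|\hat\phi(g\Gamma)|^p\,dm(g\Gamma)<\infty, \qquad \hat\phi(g\Gamma)=\sum_{v\in g\Lambda}\phi(v),
\]
for some $p>1$, replacing the classical divisor-sum bounds used by Cheung--Quas for the square torus. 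With these in hand, a Denjoy--Koksma / renewal type comparison between Birkhoff averages of $\tau$ and of $\theta\tau$ forces the eigenvalue $\theta$ to be integral, which gives weak mixing.

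I expect the main difficulty to lie in establishing these Siegel--Veech bounds along $\mathcal{L}$ rather than on the full moduli space. On $SL(2,\R)/\Gamma$ itself, $L^p$ integrability of Siegel--Veech transforms follows from Eskin--Masur type work once the cusp structure is understood, but $\mathcal{L}$ carries an induced measure that weights directions unevenly, and one must check that the short horizontal saddle distinguishing $\mathcal{L}$ does not conspire with other short saddle connections to produce anomalous contributions near the cusp. Since every point of $\mathcal{L}$ already admits a saddle connection of length at most $1$, the induced measure should be bounded away from the deepest part of the cusp, and this should yield a strictly better integrability exponent on $\mathcal{L}$ than on the ambient space. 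The single-cusp assumption on $SL(2,\R)/\Gamma$ is what keeps the bookkeeping manageable here; the general finite-cusp case would require combining analogous estimates near each cusp. Once these bounds are secured, the remainder of the argument proceeds along the BCZ blueprint.
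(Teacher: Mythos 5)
You correctly identify that the proof should adapt the Cheung--Quas strategy and that the main new ingredient is quantitative control of Siegel--Veech type counting restricted to the cross-section $\mathcal{L}$. However, you misdescribe the criterion and, more seriously, omit the mechanism by which the Siegel--Veech input does its work. Lemma~\ref{Lem: Cheung-Quas} is a structural criterion about nested Poincar\'e sections, not a statement about Birkhoff sums of $\theta\tau$ equidistributing modulo $1$ (that phrasing would be relevant for weak mixing of a suspension flow over $T$, not of the return map $T$ itself, and the Denjoy--Koksma/renewal comparison you invoke does not map onto anything in the argument). The paper takes $X_k = \mathcal{L}_{a_n}$, the $a_n$-horizontally short surfaces with $a_n \uparrow 1$, and $\phi_n = g_{a_n}$; conditions (1)--(3) are elementary computations using the geodesic conjugacy. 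Everything rides on condition (4): a uniformly positive proportion of points must make exactly one excursion into $\mathcal{L}\setminus\mathcal{L}_{a_n}$ among their first $N_n$ returns to $\mathcal{L}_{a_n}$.

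What is missing from your proposal is the reduction of condition (4) to counting points in degenerating rectangles, which is where the Siegel--Veech analysis actually enters. Lemma~\ref{Lem: fourth condition identifies} shows that such an excursion corresponds to $p_{s,t}\Lambda$ meeting a thin, tall rectangle $A_n = (a_n,1]\times(0,c_n)$ with $c_n(1-a_n)\asymp\beta$ in exactly one point. The paper then proves a first-moment lower bound $\int_{\Omega^0} W(s,t,A_n)\,dm \gtrsim \beta$ via Theorem~\ref{Prop Points in box} (explicit decomposition over the heights $J$ of $\Lambda$ and Abel summation on $\phi(j)/j$), and a second-moment upper bound $\int W\mathbb{1}_{W>1}\,dm \lesssim \beta^2$ by thickening the section integral to a Haar integral on $SL(2,\R)/\Gamma$ and invoking Burrin--Fairchild's pair-correlation estimates. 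For $\beta$ small the linear term beats the quadratic one, giving (4). A generic ``$L^p$ bound for some $p>1$'' on a fixed Siegel--Veech transform does not encode this scaling in $\beta$ as the test rectangle degenerates with fixed volume, and without the reduction through Lemma~\ref{Lem: fourth condition identifies} there is no bridge between your proposed integrability estimate and the weak-mixing criterion. Your remark that $\mathcal{L}$ avoids the deepest part of the cusp is correct but tangential; what matters is that the second-moment contribution from multiple points in $A_n$ is quadratically small in $\beta$.
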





This result generalizes the paper by Cheung-Quas \cite{CheQua} where they prove the same result in the case when $(X,\omega)$ is the square torus of unit area.

Since $(0,1)^{\mathsf{T}} \in \Lambda$, there exists $\alpha > 0$ such that $\begin{bmatrix} 1 & \alpha \\ 0 & 1 \end{bmatrix}$
generates the stabilizer of $(1,0)^{\mathsf{T}}$ in $\Gamma$. Athreya-Chaika-Leli\`evre \cite{AthChaLel} showed that $\mathcal{L}$ can be parameterized by the set
\[
\Omega = \{ (s,t) \in \mathbb{R}^2 : 0 < s \leq 1, \ 1 - \alpha s < t \leq 1 \},
\]
via the identification
\[
(s,t) \longleftrightarrow
\begin{bmatrix} s & t \\ 0 & s^{-1} \end{bmatrix}\Gamma.
\]
Uyanik-Work \cite{UyaWor} computed a parameterization for $\mathcal{L}$ when $SL(2,\R)/\Gamma$ has finitely many cusps.

Under this parameterization, the measure $m$ corresponds to the normalized Lebesgue measure on $\Omega$. This representation allows explicit computations with the Siegel-Veech transform restricted to $\mathcal{L}$ rather than to all of $SL(2,\mathbb{R})/\Gamma$.

The following theorem provides a counting formula for the expected number of holonomy vectors of a horizontally short translation surface in subsets of $\mathbb{R}^2$. Let $J = \{\zeta_1 < \zeta_2 < \cdots\}$ denote the set of heights of vectors in $\Lambda$ above the $x$--axis. Fix $s_0 \in (0,1)$ and set
\[
\Omega^0 = \{ (s,t) \in \Omega : s_0 \leq s \leq 1 \}.
\]

\begin{theorem}\label{Prop Points in box}
Let $A = [a,b) \times (0,c)$, where $b - a < \zeta_1 \alpha s_0$. Then
\[
\frac{2(b-a)}{\alpha} I_2
\ \leq\
\int_{\Omega^0} \#(p_{s,t}\Lambda \cap A)\, dm(s,t)
\ =\
\frac{2(b-a)}{\alpha} (I_1 + I_2 + I_3),
\]
where $I_1$, $I_2$, and $I_3$ are explicit functions of $c$ and $s_0$ and $p_{s,t}=\begin{bmatrix}
    s& t\\ 0& s^{-1}
\end{bmatrix}.$
\end{theorem}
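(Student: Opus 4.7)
The plan is to expand the integral via Fubini, then exploit the invariance of $\Lambda$ under the parabolic generator $\begin{bmatrix}1&\alpha\\0&1\end{bmatrix}\in\Gamma$ to collapse the infinite sum at each height $\zeta_i$ into a uniform contribution that depends only on $\zeta_i$, $s_0$, and $c$.

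\textbf{First step.} Since $|\Omega|=\alpha/2$, the normalized measure is $dm=(2/\alpha)\,ds\,dt$, and Fubini gives
\[
\int_{\Omega^0}\#(p_{s,t}\Lambda\cap A)\,dm(s,t)=\frac{2}{\alpha}\sum_{v=(x,y)\in\Lambda}\int_{\Omega^0}\mathbbm{1}[p_{s,t}v\in A]\,ds\,dt.
\]
Since $p_{s,t}v=(sx+ty,\,y/s)$, membership in $A=[a,b)\times(0,c)$ forces $y>0$, $s>y/c$, and $t\in[(a-sx)/y,(b-sx)/y)$. Combined with the defining constraints of $\Omega^0$, only vectors with $0<y<c$ contribute, and the $s$-range becomes $[\max(s_0,y/c),1]$.

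\textbf{Second step.} Group vectors by height. The invariance of $\Lambda$ under $(x,y)\mapsto(x+\alpha y,y)$ yields, for each $\zeta_i\in J$, representatives $x_{i,1},\ldots,x_{i,N_i}$ with
\[
\Lambda\cap(\R\times\{\zeta_i\})=\{(x_{i,j}+k\alpha\zeta_i,\zeta_i):1\le j\le N_i,\ k\in\Z\}.
\]
For fixed $s$ and $j$, the admissible $t$-intervals over varying $k\in\Z$ are translates by $-k\alpha s$ of a single interval of length $(b-a)/\zeta_i$. The hypothesis $b-a<\zeta_1\alpha s_0$ forces $(b-a)/\zeta_i<\alpha s$, so these translates are pairwise disjoint modulo $\alpha s$. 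Because $(1-\alpha s,1]$ is a fundamental domain of that period, summing over $k$ yields
\[
\sum_{k\in\Z}\bigl|\{t\in(1-\alpha s,1]:a\le s(x_{i,j}+k\alpha\zeta_i)+t\zeta_i<b\}\bigr|=\frac{b-a}{\zeta_i},
\]
independent of $j$ and $s$. Summing over $j\in\{1,\ldots,N_i\}$ and integrating in $s$,
\[
\int_{\Omega^0}\#(p_{s,t}\Lambda\cap A)\,dm=\frac{2(b-a)}{\alpha}\sum_{i:\zeta_i<c}\frac{N_i}{\zeta_i}\bigl(1-\max(s_0,\zeta_i/c)\bigr).
\]

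\textbf{Third step.} Splitting this sum according to whether $\zeta_i\le cs_0$ or $cs_0<\zeta_i\le c$ rewrites it as $I_1+I_2+I_3$, where each $I_j$ is an explicit partial sum in the $N_i/\zeta_i$ weighted by functions of $s_0$ and $c$; the lower bound then follows by retaining the single evidently nonnegative component $I_2$. The main technical delicacy is the covering argument in the second step: the hypothesis $b-a<\zeta_1\alpha s_0$ is exactly what guarantees the $t$-intervals tile disjointly modulo $\alpha s$, allowing the infinite $k$-sum to collapse to a closed form independent of the representative $x_{i,j}$; without it, intervals wrap around and overlap, and the clean formula breaks.
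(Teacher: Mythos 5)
Your approach is essentially the paper's: Fubini, then grouping the vectors of $\Lambda$ by height via the parabolic generator, then the disjoint-translates/periodicity argument (precisely Lemma~\ref{Lem: average number of points in an interval from a periodic sequence}) to collapse the $t$-integral; the only structural difference is that the paper applies that lemma at fixed $s$ and then integrates over $s$, while you exchange the order. Your closed form $\frac{2(b-a)}{\alpha}\sum_{\zeta_i<c}\frac{N_i}{\zeta_i}\bigl(1-\max(s_0,\zeta_i/c)\bigr)$ is correct and does equal $\frac{2(b-a)}{\alpha}(I_1+I_2+I_3)$, but your last step glosses over the identification: the case split by $\zeta_i\le cs_0$ versus $cs_0<\zeta_i$ yields only \emph{two} pieces, $(1-s_0)P_{k'}$ and $\sum_{\ell=k'+1}^{k''}\frac{\phi(\zeta_\ell)}{\zeta_\ell}\bigl(1-\zeta_\ell/c\bigr)$ where $P_\ell=\sum_{i\le\ell}\phi(\zeta_i)/\zeta_i$, and these are not themselves $I_1,I_2,I_3$. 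One still needs a summation by parts,
\[
\sum_{\ell=k'+1}^{k''}\frac{\phi(\zeta_\ell)}{\zeta_\ell}\Bigl(1-\frac{\zeta_\ell}{c}\Bigr)
=\frac{c-\zeta_{k''}}{c}P_{k''}-\frac{c-\zeta_{k'+1}}{c}P_{k'}
+\sum_{\ell=k'+1}^{k''-1}\frac{\zeta_{\ell+1}-\zeta_\ell}{c}P_\ell,
\]
after which combining the two $P_{k'}$ terms gives $\frac{\zeta_{k'+1}-cs_0}{c}P_{k'}=I_1$, the remaining sum is $I_2$, and the first term is $I_3$. The paper sidesteps this Abel step by partitioning the $s$-integral into intervals on which $\sum_{j\in J_{cs}}\phi(j)/j$ is constant, which produces $I_1$, $I_2$, $I_3$ directly as the three pieces. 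Finally, the lower bound $\frac{2(b-a)}{\alpha}I_2\le\int W\,dm$ holds because $I_1\ge0$ and $I_3\ge0$ by inspection, not because $I_2$ is nonnegative; your sentence about ``retaining the single evidently nonnegative component $I_2$'' has the logic reversed.
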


Explicit expressions for $I_1$, $I_2$, and $I_3$ are given in \S\ref{Sec: L^1 bounds}. Proposition~\ref{Prop Points in box} stands in contrast with the classical formulas of Siegel \cite{Sie} and Veech \cite{Vee}, where the integral only depends on the measure of $A$ and not its shape or position in $\R^2$. Another feature that contrasts with the usual behavior of the distribution of points in the  $SL(2,\R)$-orbit of $\Lambda$ is that when $\Gamma$ has one cusp, there exists a constant $c_0>0$ dependent on $\Gamma$ and $V$ such that if $c<c_0$, then $p_{s,t}\Lambda\cap A$ is empty. It would be interesting to study the measure on $\R^2$ induced by the Siegel--Veech transform conditioned on $\mathcal{L}$ in more detail.

\subsection{Acknowledgements}
The author thanks Yitwah Cheung, Ryan Bushling, and Bohan Yang for their many comments and suggestions.

\section{Preliminaries}

We will use this section to describe our notation and provide the necessary background.

\subsection{Horocycle and Geodesic Flows}
Let $\Gamma$ be a lattice in $SL(2,\R)$, that is, $\Gamma$ is a discrete subgroup of $SL(2,\R)$ and $SL(2,\R)/\Gamma$ has finite volume.
There are three flows in $SL(2,\R)/\Gamma$ that we are interested in: the geodesic flow and the stable and unstable horocycle flows.
Define the following three 1-parameter subgroups of $SL(2,\R)$.

$$A=\left\{g_t=\begin{bmatrix}
    t& 0\\ 0& t^{-1}
\end{bmatrix}: t>0\right\}, \mbox{  }
N=\left\{h_t=\begin{bmatrix}
    1& t\\0&1
\end{bmatrix}: t\in \R\right\}, \mbox{ and } U=\left\{u_t=\begin{bmatrix}
    1& 0\\-t&1
\end{bmatrix}: t\in \R\right\}.$$

The flows generated on $SL(2,\R)/\Gamma$ by left multiplication by $A$, $N$, and $U$ are called the geodesic, stable, and unstable horocycle flows, respectively. Both $A$ and $N$ preserve the horizontal direction in $\R^2$. In particular, the action by $N$ preserves the collection of horizontally short translation surfaces in the $SL(2,\R)$-orbit of the translation surface $\omega$ since the $x$-axis is an eigenspace with eigenvalue $1$. On the other hand, $A$ does not necessarily preserve the set of horizontally short vectors since its elements stretch the $x$-axis when $t>1$. However, when $t\leq 1$, $g_t$ contracts the $x$-axis by a factor of $t$ and therefore $g_t$ maps the set of horizontally short translation surfaces into itself, but not onto unless $t=1$. It will be convenient for us to consider the elements of $AN$ that send the set of horizontally short lattices into itself. To that end, we define $p_{s,t}=\begin{bmatrix}
    s & t\\ 0 &s^{-1}
\end{bmatrix}\in AN.$

$U$ gives a transversal flow to $\mathcal{L}$. $U$ also has the property that for any $g\Gamma\in\mathcal{L}$, the orbit $\{u_t(g\Gamma):t>0\}$ intersects $\mathcal{L}$ in an infinite discrete set of times \cite{Ath}.

\subsection{Translation surfaces}

A translation surface is a tuple $(X,\omega)$ where $X$ is a compact Riemann surface and $\omega$ is a holomorphic 1-form on $X$. For simplicity, we will abuse this notation and refer to $\omega$ as the translation surface from now on. A way to visualize a translation surface is as a disjoint union of polygons $P=\bigsqcup_{i=1}^n P_i \subset \C$ where the sides of $P$ are partitioned into pairs $\{s_a,s_b\}$ and there is a Euclidean translation mapping $s_a$ onto $s_b$. This procedure provides a closed Riemann surface $P/\sim$ when we glue the sides of $P$ along these identifications. Since $dz$ is invariant under Euclidean translation, it follows that $dz$ descends to a holomorphic one-form on $P/\sim$. This procedure, therefore, generates a translation surface. Every translation surface arises from a construction like this one. For more information on translation surfaces, one may look at the survey by Zorich \cite{Zor}.

There is a natural $SL(2,\R)$-action on the space of translation surfaces. We now describe this action via the polygon definition. Let $\omega$ be a translation surface arising from a finite union of polygons $P\subset\C\simeq \R^2$. Let $g\in SL(2,\R)$, and consider $gP$, the image of $P$ under $g$. $gP$ generates a new translation surface, which we call $g\omega$. Two translation surfaces $(X_1,\omega_1)$ and $(X_2,\omega_2)$ are equivalent if there exists a biholomorophism $f: X_1\rightarrow X_2$ with $f_{*}\omega_1=\omega_2.$ 

\vspace{0.5cm}

\paragraph*{\bf Saddle connections}

A saddle point of $\omega$ is a point $p\in X$ such that $\omega_p=0$. $\omega$ induces a flat metric on $X\setminus\{\text{ saddle points of }\omega\}$. A saddle connection is a geodesic $\gamma$ connecting two (not necessarily distinct) saddle points of $\omega$ without crossing any other saddle points in between. Each saddle connection $\gamma$ provides us with a complex number via $\gamma\mapsto z_\gamma=x_\gamma+iy_\gamma=\int_\gamma \omega.$ We call $z_\gamma$ the holonomy vector of $\gamma$. We collect the set of holonomy vectors of $\omega$ in a set $\Lambda^*=\Lambda^*_\omega$. We will mainly be interested in the shortest element of $\Lambda^*$ in each direction, so we collect these elements in a set $\Lambda=\Lambda_\omega.$
Notice that we have an equivariant relation where for each $g\in SL(2,\R)$, $\Lambda_{g\omega}=g\Lambda_\omega.$

Much work has been done to understand the statistics of the distribution of saddle connections for different types of translation surfaces. Masur showed that the growth rate of the number of saddle connections of length at most $R$ grows quadratically in $R$ \cite{Mas}. More recently, Osman-Southerland-Wang\cite{OsmSouWan} computed effective bounds on the distribution of slope gaps between saddle connections on lattice surfaces, and Artiles \cite{Art} computed the distribution of holonomy vectors of lattice surfaces in randomly chosen directions.

\vspace{0.5cm}

\paragraph*{\bf Lattice Surfaces} 

We refer to the stabilizer of $\omega$ under the $SL(2,\R)$-action as the Veech group of $\omega$, and we will denote it by $\Gamma=\Gamma_\omega$. If $\Gamma$ is a lattice in $SL(2,\R)$, we say that $\omega$ is a lattice surface. Veech surface is another common term used for lattice surfaces.

Veech showed in \cite{Vee} that if $\omega$ is a lattice surface, we can write $\Lambda$ as a disjoint union of $\Gamma$-orbits. That is, there exist vectors $v_1, v_2,..., v_m$ such that $\Lambda=\bigsqcup_{i=1}^m \Gamma v_i.$


\subsection{The $\omega$-BCZ map}
Let $\Gamma$ be a lattice in $SL(2,\R)$ containing $-\Id$. Suppose further that $SL(2,\R)/\Gamma$ has only one cusp.

Let $v_1,...,v_m$ be a collection of non-zero vectors in $\R^2$ such that $\Gamma v_i$ is discrete and that the set of directions of $\Gamma v_i$ and $\Gamma v_j$ are disjoint whenever $i\neq j$.

By acting via some element of $SL(2,\R)$ we may assume that $v_1=(1,0)^{\mathsf{T}}$.

Since $\Gamma (1,0)^{\mathsf{T}}$ is discrete it follows that there exists $\alpha>0$ such that 
$\begin{bmatrix}
    1& \alpha\\ 0& 1
\end{bmatrix}$ generate the maximal parabolic subgroup of $\Gamma$ fixing $(1,0)^{\mathsf{T}}$.


In the case that $\Lambda=\bigsqcup_{i=1}^m\Gamma v_i$  is the set of visible holonomy vectors of $\omega$, for $g\in SL(2,\R)$, we have that $g\Lambda=\bigsqcup_{i=1}^m g\Gamma v_i$ is the set of visible holonomy vectors of $g\omega$. Notice that the $SL(2,\R)$-orbit of $\Lambda$ is parameterized by $SL(2,\R)/\Gamma$. We will use the identification $g\Lambda \leftrightarrow g\Gamma$ implicitly during the rest of the paper.

An element $g\Gamma\in SL(2,\R)/\Gamma$ is called $h$-horizontally short if $g\Lambda \cap \{(x,0)\in\R^2:0<x\leq h\}\neq\varnothing$. When $h=1$, we will use the term horizontally short instead of $1$-horizontally short. We denote the set of $h$-horizontally short elements of $SL(2,\R)/\Gamma$ by $\mathcal{L}_h$

Athreya \cite{Ath} showed that $\mathcal{L}_h$ is a Poincar\'e section for the unstable horocycle flow on $SL(2,\R)/\Gamma$. We denote the the return map $T_h:\mathcal{L}_h\rightarrow\mathcal{L}_h$, and $T_1$ will be denoted simply by $T$. The $SL(2,\R)$ invariant probability measure on $SL(2,\R)/\Gamma$ induces a measure $m_h$ on $\mathcal{L}_h$. In \cite{AthChaLel}, Athreya-Chaika-Leli\`evre showed that $\mathcal{L}_h$ is a hypersurface in $SL(2,\R)/\Gamma$ and computed an explicit parameterization. \begin{equation}\label{Eq: Parameterization of BCZ domain}
    \Omega_h=\Omega_{\omega,h}=\{(s,t)\in \R^2: 0<s\leq h, 1-\alpha s<t\leq 1\}\leftrightarrow \mathcal{L}_h
\end{equation}
via $(s,t)\leftrightarrow \begin{bmatrix}
    s & t\\ 0 & s^{-1}
\end{bmatrix}\Gamma$.

We should note here that Uyanik-Work \cite{UyaWor} computes a parameterization of $\mathcal{L}$ for general $\Gamma$ as a disjoint set of triangles in $\R^2$; one triangle per cusp of $SL(2,\R)/\Gamma$.
The identification of $\mathcal{L}_h$ and $\Omega_h$ identifies the measure $m_h$ on $\mathcal{L}_h$ with the normalized Lebesgue measure on $\Omega_h$. For this reason, we will denote both measures as $m_h$. $m_1$ will be simply denoted $m$.

\section{Three Lemmas}\label{Sec: Two Lemmas}

This section contains three key lemmas. The first is a lemma due to Chueng-Quas \cite{CheQua} which provides a way to verify if an ergodic transformation is weakly-mixing. The other two lemmas are computational tools that will be used to count the number of points in subsets of $\R^2$.

\begin{Lemma}(Cheung-Quas 2024)\label{Lem: Cheung-Quas}
    Let $(X,d)$ be a compact metric space equipped with a Borel probability measure $m$. Let $T$ be an ergodic measure-preserving transformation of $(X,m)$.

Suppose that there exists $\tau>0$, a sequence of subsets $(X_k)$ of $X$, a sequence of positive integers $N_k$ and a sequence of maps $\phi_k:X\rightarrow X_k$ satisfying the following conditions:
\begin{enumerate}
    \item $m(X_k)\rightarrow 1$.
    \item The return map $T_k$ of $X_k$ is conjugate to $T$ via $\phi_k$.
    \item For all $\delta>0, m\left(\left\{x\in X: d(\phi_k(x),x)>\delta\right\}\right)\rightarrow 0$.
    \item $m\left(\left\{x\in X_k: R_k^{N_k}(x)=N_k+1\right\}\right)>\tau$, where $R_k^{N_k}(x)$ denotes the $n$th return time to $X_k$.
\end{enumerate}
Then $T$ is weakly mixing.
\end{Lemma}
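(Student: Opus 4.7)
The natural strategy is via the spectral characterization of weak mixing: an ergodic measure-preserving transformation is weakly mixing if and only if it has no non-constant measurable eigenfunctions. I would argue by contradiction, assuming $f \in L^2(X,m)$ satisfies $f \circ T = \lambda f$ for some $\lambda \neq 1$; the ergodicity of $T$ makes $|f|$ a $T$-invariant function, hence constant, and after normalization $|f| \equiv 1$ almost everywhere.

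The conjugacy in condition (2) transports $f$ to $g_k := f \circ \phi_k^{-1}$, an eigenfunction of $T_k$ on $X_k$ with the same eigenvalue $\lambda$; iterating gives $g_k(T_k^{N_k} y) = \lambda^{N_k} g_k(y)$. Because $T_k$ is the first-return map of $T$ to $X_k$, one has $T_k^{N_k}(y) = T^{R_k^{N_k}(y)}(y)$. On the set $E_k := \{y \in X_k : R_k^{N_k}(y) = N_k+1\}$, which has $m(E_k) > \tau$ by condition (4), this becomes $T_k^{N_k}(y) = T^{N_k+1}(y) \in X_k$, so both eigenvalue identities
\[
f(\phi_k^{-1}(T^{N_k+1} y)) = \lambda^{N_k} f(\phi_k^{-1}(y)), \qquad f(T^{N_k+1} y) = \lambda^{N_k+1} f(y)
\]
hold on $E_k$. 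Subtracting them and using $|f|\equiv 1$ yields
\[
|\lambda - 1| \leq \bigl|f(\phi_k^{-1}(T^{N_k+1} y)) - f(T^{N_k+1} y)\bigr| + \bigl|f(\phi_k^{-1}(y)) - f(y)\bigr| \quad \text{on } E_k.
\]
The asymmetry between the exponents $N_k$ and $N_k+1$ is precisely the extra factor of $\lambda$ contributed by the ``$+1$'' in condition (4), and it is the source of the factor $\lambda-1$ on the left.

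To close the argument I would prove the right-hand side tends to zero in $L^2(E_k,m)$. The key intermediate estimate is $\|f\circ\phi_k^{-1}-f\|_{L^2(X_k)} \to 0$: approximate $f$ in $L^2$ by a continuous function $f'$, use condition (1) together with the fact that $\phi_k$ pushes $m$ forward to $m/m(X_k)$ on $X_k$ to control the error from the approximation, and use condition (3) together with the uniform continuity of $f'$ on the compact metric space $X$ to handle $f'\circ\phi_k^{-1}-f'$. For the first term on the right, since $T^{N_k+1}$ is measure-preserving and sends $E_k$ into $X_k$, the change of variable $z = T^{N_k+1}(y)$ gives
\[
\int_{E_k} \bigl|f(\phi_k^{-1}(T^{N_k+1} y)) - f(T^{N_k+1} y)\bigr|^2 \, dm(y) \leq \|f\circ \phi_k^{-1} - f\|_{L^2(X_k)}^2 \to 0.
\]
Squaring and integrating the displayed inequality over $E_k$ then gives $|\lambda - 1|^2 \tau \leq o(1)$, forcing $\lambda = 1$ and contradicting the choice of $\lambda$. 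The main obstacle is purely measure-theoretic bookkeeping: since $f$ is only $L^2$ and $T^{N_k+1}$ can be wildly discontinuous, the approximation of $f$ by continuous functions and the change of variable must be carried out carefully; once the right algebraic identity is spotted, the rest is routine.
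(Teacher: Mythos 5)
The paper does not prove this lemma; it states it and cites Cheung--Quas \cite{CheQua}, so there is no internal proof to compare against. Judged on its own, your proof is correct and uses each of the four hypotheses for its intended purpose. The spectral reduction is standard; the crucial observation is that conjugacy (2) transports an eigenfunction of $T$ with eigenvalue $\lambda$ to an eigenfunction of $T_k$ with the \emph{same} eigenvalue, while on the set $E_k$ from hypothesis (4) the $N_k$-th return equals $T^{N_k+1}$, so evaluating both eigenvalue relations at the same point produces a mismatch of exponents $N_k$ versus $N_k+1$ that isolates the factor $|\lambda - 1|$. Your algebraic step $|\lambda-1| = |f(y)-\lambda f(y)| \le |f(\phi_k^{-1}y)-f(y)| + |\lambda^{N_k}f(\phi_k^{-1}y)-\lambda^{N_k+1}f(y)|$ (using $|\lambda|=1$ and $|f|\equiv1$) is exactly right, and the change of variable $z=T^{N_k+1}y$ with the containment $T^{N_k+1}(E_k)\subset X_k$ gives the needed one-sided inequality without any injectivity hypothesis. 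The intermediate estimate $\|f\circ\phi_k^{-1}-f\|_{L^2(X_k)}\to 0$ is where hypotheses (1) and (3) enter, via a continuous approximation $f'$; this requires $\phi_{k*}m = m|_{X_k}/m(X_k)$, which is implicit in the meaning of ``conjugate'' for measure-preserving systems, and worth stating explicitly. Squaring and integrating over $E_k$ with $m(E_k)>\tau$ closes the contradiction. This is almost certainly the argument of Cheung--Quas, so your proposal is correct and takes the natural route; the paper simply quotes the result.
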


The ergodicity of $(\mathcal{L},T,m)$ was proved in \cite{UyaWor}. The first three conditions of Lemma \ref{Lem: Cheung-Quas} are not hard to check in our case. A large part of this paper is used to prove part $(4)$ holds for our dynamical system $(\mathcal{L}, T, m)$.

The next lemma will be used to compute the average number of elements of $p_{s,t}\Lambda$ with a prescribed height in a subset $A$ of $\R^2$ when proving Theorem \ref{Prop Points in box}.

\begin{Lemma}\label{Lem: average number of points in an interval from a periodic sequence}
    Let $\alpha>0, j\geq 1$, and $a,b\in \R$ with $b>a$. Suppose also that $b-a<\alpha j s$. Consider the points $0\leq x_1<x_2<...<x_k<\alpha j$, and define $\mathfrak{L}_{s,t}=\{sx_i+t j + \alpha j ns: n\in\Z, 1\leq i\leq k\}$. Then
    $$\int_{1-\alpha s}^1 \#\left(\mathfrak{L}_{s,t} \cap (a,b]\right) dt=\frac{k}{j}(b-a).$$
\end{Lemma}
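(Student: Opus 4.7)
The plan is to split the count $\#(\mathfrak{L}_{s,t} \cap (a,b])$ into a sum of $k$ contributions, one per index $i$, and then evaluate each resulting one–dimensional integral by a change of variables combined with a periodicity/tiling argument.

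First I would observe that the hypothesis $0 \leq x_1 < \cdots < x_k < \alpha j$ forces the $x_i$ to lie in distinct cosets of $\alpha j \Z$, so that whenever $s > 0$ the elements $sx_i + tj + \alpha j n s$ with $(i,n) \in \{1,\ldots,k\} \times \Z$ are pairwise distinct. Consequently
\begin{equation*}
\#\bigl(\mathfrak{L}_{s,t} \cap (a,b]\bigr) \;=\; \sum_{i=1}^{k} f_i(t),
\qquad
f_i(t) := \#\bigl\{n \in \Z : sx_i + tj + \alpha j n s \in (a,b]\bigr\},
\end{equation*}
and it suffices to prove $\int_{1-\alpha s}^{1} f_i(t)\,dt = (b-a)/j$ separately for each $i$ and then sum.

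Next, for a fixed $i$ I would substitute $u = sx_i + tj$, so that $dt = du/j$ and the interval $[1-\alpha s,1]$ is carried to an interval $I_i \subset \R$ of length $\alpha j s$. The integrand becomes $G(u) := \#\bigl((a,b] \cap (u + \alpha j s \Z)\bigr)$, which is periodic in $u$ with period $\alpha j s$. Since $I_i$ is exactly one period, its position in $\R$ is irrelevant and
\begin{equation*}
\int_{1-\alpha s}^{1} f_i(t)\,dt \;=\; \frac{1}{j}\int_{I_i} G(u)\,du \;=\; \frac{1}{j}\int_0^{\alpha j s} G(u)\,du.
\end{equation*}
Unfolding $G$ by Fubini, using the fact that the intervals $[\alpha j n s, \alpha j (n+1) s)$ tile $\R$, evaluates the last integral to $\mathrm{Leb}((a,b]) = b-a$. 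Dividing by $j$ and summing over $i \in \{1,\ldots,k\}$ gives the claimed value $k(b-a)/j$.

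There is no real obstacle in the argument; everything reduces to a single substitution and a tiling identity. The hypothesis $b - a < \alpha j s$ is not strictly needed for the integral formula itself, but it guarantees $G(u) \in \{0,1\}$ almost everywhere, a cleanliness that I expect to be exploited when these pointwise counts are used to estimate the quantities $I_1,I_2,I_3$ appearing in Theorem~\ref{Prop Points in box}.
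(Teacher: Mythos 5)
Your proposal is correct and follows essentially the same route as the paper: decompose the count by the index $i$, note that each summand is periodic in $t$ (equivalently, in $u=sx_i+tj$), and evaluate the integral over one period by a tiling/Fubini identity to get $(b-a)/j$ per index. You additionally, and correctly, make explicit two points the paper glosses over: that the elements $sx_i+tj+\alpha jns$ are pairwise distinct over pairs $(i,n)$, so the count really does split as a sum over $i$, and that the hypothesis $b-a<\alpha js$ is not needed for the integral identity itself but only to ensure that for fixed $(s,t)$ at most one $n$ contributes per index.
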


\begin{proof}
    Let $\chi$ be the characteristic function of $(a,b]$. Then

    \begin{equation*}
        \#\left(\mathfrak{L}_{s,t} \cap [a,b)\right)=\sum_{i=1}^k \sum_{n\in\Z}\chi(sx_i+tj+ \alpha jns)
    \end{equation*}

So we have that 

\begin{equation*}
    \int_{1-\alpha s}^1 \#\left(\mathcal{L}_{s,t} \cap [a,b)\right) dt=\sum_{i=1}^k \int_{1-\alpha s}^1 \sum_{n\in\Z}\chi(sx_i+t j+ \alpha jns)dt
\end{equation*}

So we proceed to compute $\int_{1-\alpha s}^1 S_i(t) dt$, where $S_i(t)= \sum_{n\in\Z}\chi(sx_i+t j+ \alpha jns).$

Notice that $S_i(t)$ is $\alpha s$ periodic, that is 
\begin{equation}
  S_i(t)=S_i(t+\alpha s).  
\end{equation}

We now claim that if $n_1,n_2$ are integers and $\chi(sx_i+t j+ \alpha jn_1s)=\chi(sx_i+tj+ \alpha jn_2s)=1,$ then $n_1=n_2.$

Suppose that $1=\chi(sx_i+t j+ \alpha jn_1s)=\chi(sx_i+t j+ \alpha jn_2s)$. Then $$a<sx_i+t j+ \alpha jn_1s,sx_i+tj+ \alpha jn_2s\leq b.$$
This implies that
\begin{align*}
    b-a&>\abs{(sx_i+t j+ \alpha jn_2s) -(sx_i+t j+ \alpha jn_1s)}\\
    &=\alpha j s\abs{n_2-n_1}.
\end{align*}
If $n_1\neq n_2$, then $b-a>\alpha j s\abs{n_2-n_1}\geq \alpha j s$. This contradicts our hypothesis. Therefore, $n_1=n_2$.

It then follows that
\begin{align*}
    \int_0^{\alpha s} S_i(t) dt&=\sum_{i\in \Z} \int_0^{\alpha s} \chi(sx_i+tj+
    \alpha jns)dt\\
    &=\sum_{i\in \Z} \int_0^{\alpha s} \chi_n(t)dt\\
    &=\sum_{i\in \Z} \operatorname{Len}\left([0,\alpha s)\cap  \left[\frac{a-sx_i}{j}-\alpha n s ,\frac{b-sx_i}{j}-\alpha n s\right)\right)\\
    &=\frac{b-a}{j}.
\end{align*}
Above, $\chi_n$ is the characteristic function of $\left[\frac{a-sx_i}{j}-\alpha n s ,\frac{b-sx_i}{j}-\alpha n s\right)$.

It then follows that 

\begin{align*}
    \int_{1-\alpha s}^1 \#\left(\mathfrak{L}_{s,t} \cap [a,b)\right) dt&=\sum_{i=1}^k \int_{1-\alpha s}^1 \sum_{n\in\Z}\chi(sx_i+t j+ \alpha jns)\\
    &=\sum_{i=1}^k \int_{1-\alpha s}^1 \sum_{n\in\Z}\chi(sx_i+t j+ \alpha jns)\\
    &=\sum_{i=1}^k \frac{b-a}{j}\\
    &=\frac{k}{j}(b-a).
\end{align*}
\end{proof}

The next lemma is used to control the asymptotics of the functions $I_1$, $I_2$, and $I_3$ from Theorem \ref{Prop Points in box} needed for Corollary \ref{cor: first bound}.

\begin{Lemma}\label{Lem: Abel on Holonomy Vectors}
Let $J\subset [1,\infty)$ and let $g: J\rightarrow \R$ be a non-negative function and $J_N=J\cap [0,N)$. Suppose that \begin{equation}
    \lim_{N\rightarrow \infty} N^{-2}\sum_{j\in J_N} g(j)=c>0.
\end{equation}
then,
\begin{equation}
    \lim_{N\rightarrow \infty} N^{-1}\sum_{j\in J_N} \frac{g(j)}{j}=2c.
\end{equation}
    
\end{Lemma}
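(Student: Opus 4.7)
The plan is to prove this by Abel summation (partial summation). Let $G(N) := \sum_{j \in J_N} g(j)$, so the hypothesis is $G(N)/N^2 \to c$. Writing the sum $\sum_{j \in J_N} g(j)/j$ as a Riemann--Stieltjes integral against the step measure $dG$ and integrating by parts, I expect to get
\[
\sum_{j \in J_N} \frac{g(j)}{j} \;=\; \int_{[1,N)} \frac{1}{t}\, dG(t) \;=\; \frac{G(N)}{N} - G(1) + \int_1^N \frac{G(t)}{t^2}\, dt,
\]
after which the problem reduces to controlling each term after dividing by $N$.

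The first term $G(N)/N^2$ tends to $c$ by hypothesis; the $G(1)/N$ term goes to $0$; so everything comes down to showing
\[
\frac{1}{N}\int_1^N \frac{G(t)}{t^2}\, dt \;\longrightarrow\; c.
\]
This is a straightforward Cesàro-type statement: since the integrand $G(t)/t^2$ converges to $c$, given $\varepsilon > 0$ I would choose $T$ so that $|G(t)/t^2 - c| < \varepsilon$ for $t \geq T$, split the integral at $T$, and observe that the $[1,T]$-piece contributes $O(1/N)$ while the $[T,N]$-piece equals $c\,(N-T)/N \pm \varepsilon$ up to negligible error. Summing the three contributions yields the target limit $c + 0 + c = 2c$.

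The main obstacle is a minor bookkeeping issue: $J$ is only assumed to be a subset of $[1,\infty)$, not $\mathbb{N}$, so I need to be careful that the Stieltjes integration by parts is justified for the step function $G$. This is routine provided $g \ge 0$ (so $G$ is monotone nondecreasing, hence of bounded variation on every $[1,N]$), which is exactly the hypothesis. A small additional subtlety is whether $J$ could accumulate, but the hypothesis $G(N)/N^2 \to c$ implicitly forces $G(N)$ to be finite for each $N$, so $J_N$ is automatically finite and the integral is just a finite sum with appropriate endpoint conventions. No other piece of the argument is delicate.
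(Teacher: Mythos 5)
Your proof is correct and rests on the same core idea as the paper's---Abel/partial summation combined with a Ces\`aro averaging step---but the execution differs and is arguably cleaner. The paper bins $J_N$ into unit intervals $[n,n+1)$, replaces $1/j$ by $1/n$ (upper bound) and $1/(n+1)$ (lower bound) on each bin, applies a discrete Abel summation to each, and squeezes to identify the limit; the value $2c$ shows up in both the $\limsup$ and $\liminf$ estimates as a boundary term contributing $c$ plus a Ces\`aro-averaged term contributing another $c$. You instead write the sum exactly as the Riemann--Stieltjes integral $\int_{[1,N)} t^{-1}\,dG(t)$, with $G(t)=\sum_{j\in J,\,j<t}g(j)$, and integrate by parts to obtain the exact identity
\[
\sum_{j\in J_N}\frac{g(j)}{j}=\frac{G(N)}{N}+\int_1^N\frac{G(t)}{t^2}\,dt,
\]
using $G(1)=0$ since $J\subset[1,\infty)$; dividing by $N$ then yields $c+c$ directly from the hypothesis plus a standard Ces\`aro argument, with no two-sided sandwich required. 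The technical points you flag are exactly right: $g\ge 0$ makes $G$ nondecreasing, hence locally of bounded variation, and $G(N)/N^2\to c$ already forces $J_N$ to be finite for every $N$, so the Stieltjes integration by parts is unproblematic. The one thing I would add for completeness is a line actually verifying that identity---for example by writing $1/t = 1/N + \int_t^N u^{-2}\,du$ and applying Tonelli---since endpoint conventions for Stieltjes integrals against step functions are easy to get wrong; once that is pinned down, the rest of your argument is airtight.
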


\begin{proof}
This will be an application of Abel's summation by parts formula.

Notice that 
\begin{align}
    \sum_{j\in J_N}\frac{g(j)}{j}&=\sum_{n=1}^{N-1}\sum_{n\leq j<n+1} \frac{g(j)}{j}\\
    &\leq \sum_{n=1}^{N-1}\frac{1}{n}\sum_{n\leq j<n+1} g(i)\\
    &=\sum_{n=1}^{N-1}\frac{1}{n(n+1)}\sum_{j\leq n+1} g(j) +\frac{1}{N}\sum_{j<N}g(j)
\end{align}

Notice that $\frac{1}{n(n+1)}\sum_{j<n+1}g(j)$ converges to $c$ as $n$ tends to infinity. So, the Cez\`aro average also converges to $c$. This means that 
\begin{align*}
    \limsup_{N\rightarrow\infty}\frac{1}{N}\sum_{j\in J_N}\frac{g(j)}{j}& \leq \limsup_{N\rightarrow \infty} \left(\frac{1}{N}\sum_{n=1}^{N-1}\frac{1}{n(n+1)}\sum_{j\leq n+1} g(j) +\frac{1}{N^2}\sum_{j<N}g(j)\right)\\
    &=c+c=2c.
\end{align*}

Similarly, since

\begin{align*}
    \sum_{j\in J_N}\frac{g(j)}{j}=\sum_{n=1}^{N-1}\sum_{n\leq j<n+1} \frac{g(j)}{j}
    \geq \sum_{n=1}^{N-1}\frac{1}{n+1}\sum_{n\leq j<n+1} g(j),
\end{align*}

Using the same argument, one can show that
\begin{align}
    2c=\liminf_{N\rightarrow \infty} \sum_{n=1}^{N-1}\frac{1}{n+1}\sum_{n\leq j<n+1} g(j)\leq \liminf_{N\rightarrow\infty} \sum_{j\in J_N}\frac{g(j)}{j}.
\end{align}

This completes the proof.
\end{proof}

\section{Verifying Cheung-Quas Lemma Hypotheses 1-3}\label{sec: hyp 123}

We use this section to verify the hypotheses (1), (2), and (3) from Lemma \ref{Lem: Cheung-Quas}. 

Let $a_n$ be a sequence of positive real numbers strictly increasing to $1$. We consider the sequence of subsets $\mathcal{L}_{a_n}$ equipped with the maps $\phi_n:\mathcal{L}\rightarrow\mathcal{L}_{a_n}$ given by $\phi_n(g\Lambda)=\begin{bmatrix}
    a_n & 0\\ 0& a_n^{-1}
\end{bmatrix}g\Lambda.$

\paragraph*{\bf Part (1)} We must check that the measure of $\mathcal{L}_{a_n}$ converges to $1$ as $n\rightarrow \infty$. This follows from the computation that 

$$m(\mathcal{L}_{a_n})=m(\{(s,t)\in \Omega: 0<s\leq a_n\})=a_n^2.$$

This shows that the sets $\mathcal{L}_{a_n}$ satisfy hypothesis (1).

\vspace{0.5cm}

\paragraph*{\bf Part (2)}
Let $g\Gamma\in \mathcal{L}$ and let $r_0$ denote the return time of $g\Gamma$ to $\mathcal{L}$, that means $T(g\Gamma)=u_{r_0}(g\Gamma)$. Let $v$ be the vector of smallest slope in $g\Lambda\cap((0,1]\times\R_+).$ It then follows that $r_0=\operatorname{slope}(v)$. Since $\operatorname{slope}(\phi_n(v))=\frac{1}{a_n^2}\operatorname{slope}(v)$, it follows that the return time of $\phi_n(g\Gamma)$ to $\mathcal{L}_{a_n}$ is $a_n^{-2}r_0$. This means $T_{a_n}(\phi_n(g\Gamma))=u_{a_n^{-2}r_0}\phi_n(g\Gamma)$. Since

$$\begin{bmatrix}
    a_n&0\\0&a_n^{-1}
\end{bmatrix}\begin{bmatrix}
    1&0\\-r_0&1
\end{bmatrix}=\begin{bmatrix}
    1& 0\\-a_n^{-2}r_0& 1
\end{bmatrix}\begin{bmatrix}
    a_n&0\\0&a_n^{-1}
\end{bmatrix},$$ it follows that $\phi_n\circ T=T_{a_n}\circ\phi_n$.

\vspace{1cm}







\paragraph*{\bf Part (3)}
Consider the triangle $\Delta_n$ with vertices $(1,a_n)^{\mathsf{T}} , (1-a_n^2,a_n)^{\mathsf{T}} , (1, a_n-\alpha a_n^2)^{\mathsf{T}}$ in $\Omega$. A computation shows that if $(x,y)^{\mathsf{T}}$ is in the interior of $\Delta_n$, then $d((x,y)^{\mathsf{T}} ,\phi_n((x,y)^{\mathsf{T}} ))^2=(1-a_n)^2(x^2+\frac{y^2}{a_n^2})$. As $n\rightarrow \infty$, the distance converges to zero. Since $m(\Delta_n)\rightarrow 1$, we have completed the proof of part 3.

\vspace{1cm}
All that remains to show is that part (4) of Lemma \ref{Lem: Cheung-Quas} holds for our dynamical system $(\mathcal{L}, T, m)$.

\section{Heights of Holonomy Vectors} \label{Sec: Heights of Holonomy Vectors}



In this section we study the set of heights vectors in $\Lambda$. Let $J=\pi_2(\Lambda)\cap \R_+$, where $\pi_2$ is the projection into the $y$-axis. 
It will be convenient for our computations to think of $J$ as an ordered set, $J=\{\zeta_1<\zeta_2<...\}$. For each $R>0$, we define $J_R=J\cap(0,R)$. The next two propositions tell us that $J$ is discrete and that it has bounded gaps.


\begin{Proposition}\label{Prop Discreteness of heights}
 The set $J$ is a discrete subset of the positive $y$-axis.
\end{Proposition}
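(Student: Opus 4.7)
The plan is to argue by contradiction. Suppose some $y_0\in(0,\infty)$ is an accumulation point of $J$, and extract a sequence of distinct heights $y_n\in J$ converging to $y_0$. For each $n$ choose a representative $v_n=(x_n,y_n)\in\Lambda$ with $\pi_2(v_n)=y_n$. The goal is to modify the $v_n$ within $\Lambda$ so that the entire sequence lies in a single bounded subset of $\R^2$; combined with the fact that $\Lambda\subset\Lambda^*$ is discrete in $\R^2$ (Masur's quadratic growth theorem for saddle connections, already cited in the paper), this produces the desired contradiction because the $v_n$ are automatically distinct.

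The concrete steps, in order, are as follows. First, record that $\Lambda^*$ is locally finite in $\R^2$, and hence so is $\Lambda$. Second, use that $\Gamma$ acts on $\Lambda$: Veech group elements permute saddle connections and preserve the property of being shortest in one's direction, so in particular the horizontal parabolic $P=\begin{bmatrix}1&\alpha\\0&1\end{bmatrix}\in\Gamma$ restricts to a bijection of $\Lambda$, acting by $(x,y)\mapsto(x+\alpha y,y)$. Third, for each $n$ pick $k_n\in\Z$ so that the first coordinate of $P^{k_n}v_n$ lies in $[0,\alpha y_n)$, and replace $v_n$ by this normalized representative; it still belongs to $\Lambda$ and still has height $y_n$. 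Then $\lvert v_n\rvert^2<(1+\alpha^2)y_n^2$, which is uniformly bounded since $y_n\to y_0$, so the $v_n$ eventually lie in a fixed compact subset of $\R^2$. Finally, the $v_n$ are all distinct because their second coordinates $y_n$ are distinct, contradicting local finiteness of $\Lambda$.

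The main obstacle is the normalization step: without additional structure, the first coordinates $x_n$ could escape to infinity and the discreteness of $\Lambda^*$ would not be applicable. The horizontal parabolic element of the Veech group is exactly what allows us to shift the representatives along the horizontal direction and bring them into a bounded region. Aside from this step, which is where the lattice-surface hypothesis enters essentially, the rest of the argument is routine. As a side remark, the same reasoning applied with $y_0=0$ shows that $J$ cannot accumulate at $0$ either, since the same normalization would force $\lvert v_n\rvert\to 0$, which is impossible because $0\notin\Lambda^*$ and $\Lambda^*$ is discrete.
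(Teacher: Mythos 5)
Your argument is correct and is essentially the paper's proof: both use the horizontal parabolic $\begin{bmatrix}1&\alpha\\0&1\end{bmatrix}\in\Gamma$ to normalize each holonomy vector so that its $x$-coordinate lies in $[0,\alpha y)$, and then invoke discreteness of $\Lambda$ in $\R^2$ together with boundedness of the resulting region. The paper phrases this directly as finiteness of $J_R$ for each $R$, while you run it as a proof by contradiction, but the mechanism is identical (and the appeal to Masur's theorem for local finiteness of $\Lambda^*$ is heavier than needed — discreteness of the holonomy vectors of a compact translation surface is elementary).
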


\begin{proof}

Since $\begin{bmatrix}
    1 & \alpha\\ 0 & 1
\end{bmatrix}\Lambda=\Lambda$, it follows that the set of heights in $J_R$ are precisely the set of heights of the vectors in the triangle $T$ bounded by $0<x\leq \alpha y$ and $0<y<R$. Since $\Lambda$ is discrete and $T$ is bounded, it follows that $\Lambda\cap T$ is finite. Hence $J_R$ is finite and, therefore, $J$ is discrete.

\end{proof}

\begin{Proposition}\label{Prop: bound on height gaps}
Let us order $J=\{\zeta_1<\zeta_2<...\}$ with the usual ordering in $\R$. Then $\sup_{n\in\N} \{\zeta_{n+1}-\zeta_n\}<\infty$.

\end{Proposition}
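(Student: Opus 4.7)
The plan is to exhibit inside $J$ an arithmetic progression with a single fixed positive common difference; once this is in place, all sufficiently large consecutive gaps in $J$ are automatically bounded by that common difference, while the finitely many heights below the first term of the progression contribute only finitely many additional gaps thanks to Proposition~\ref{Prop Discreteness of heights}.

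First I would select
\[
\gamma_0 \;=\; \begin{pmatrix} a_0 & b_0 \\ c_0 & d_0 \end{pmatrix} \in \Gamma
\]
lying outside the parabolic stabilizer $\Gamma_\infty = \langle h_\alpha, -\Id \rangle$ of $(1,0)^{\mathsf{T}}$. Such an element exists because $\Gamma$ is a lattice in $SL(2,\R)$ while $\Gamma_\infty$ has infinite covolume, so $\Gamma \neq \Gamma_\infty$. In particular $c_0 \neq 0$, and replacing $\gamma_0$ by $-\gamma_0 \in \Gamma$ if necessary allows me to assume $c_0 > 0$.

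Using the fact $(0,1)^{\mathsf{T}}\in\Lambda$ recorded in the introduction, together with $\Gamma\Lambda = \Lambda$, I would then examine the family
\[
w_n \;=\; \gamma_0\, h_\alpha^{\,n}\, (0,1)^{\mathsf{T}} \;=\; \bigl(a_0 n\alpha + b_0,\; c_0 n\alpha + d_0\bigr)^{\mathsf{T}} \in \Lambda, \qquad n\in\Z.
\]
Their heights $c_0 n\alpha + d_0$ form an arithmetic progression in $\R$ with common difference $\alpha c_0$, and every value with $n$ large enough to make $c_0 n\alpha + d_0 > 0$ lies in $J$. Consequently, given any $\zeta_k \in J$ past the threshold where this progression has become positive, the interval $(\zeta_k,\zeta_k+\alpha c_0]$ contains a term of the progression and therefore a point of $J$, which forces $\zeta_{k+1} - \zeta_k \leq \alpha c_0$. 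Only finitely many heights lie below that threshold (by Proposition~\ref{Prop Discreteness of heights}), so the maximum of those finitely many gaps together with $\alpha c_0$ delivers the desired uniform bound.

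The only point that is not routine bookkeeping is the assertion $w_n \in \Lambda$ rather than only $w_n \in \Lambda^{*}$, since $\Lambda$ singles out the \emph{shortest} holonomy representative in each direction. I expect this to be the main conceptual obstacle, and I would deal with it by noting that every $\gamma\in SL(2,\R)$ restricts on any line through the origin to multiplication by a single scalar, so it preserves ratios of lengths along that line; combined with $\gamma\Lambda^{*}=\Lambda^{*}$ for $\gamma\in\Gamma$, this shows that the shortest element on any line of $\Lambda^{*}$ is mapped to the shortest element on the image line, whence $\gamma\Lambda=\Lambda$ and $w_n\in\Gamma\cdot(0,1)^{\mathsf{T}}\subset\Lambda$.
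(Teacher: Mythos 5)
Your proof is correct and uses essentially the same idea as the paper's: combine the parabolic $h_\alpha$ with a non-parabolic $\gamma_0\in\Gamma$ (with $c_0\neq 0$) to exhibit an arithmetic progression inside $J$ whose common difference, a multiple of $\alpha c_0$, then bounds all the gaps. The paper instead applies the conjugate $A h_\alpha^n A^{-1}$ to the horizontal vector $(x,0)^{\mathsf{T}}\in\Lambda$, which yields the entirely positive progression $\{x n\alpha c^2 : n\in\N\}$ and so sidesteps both your threshold bookkeeping and your appeal to $(0,1)^{\mathsf{T}}\in\Lambda$ (a claim in the introduction that looks like a typo for $(1,0)^{\mathsf{T}}$); replacing your base vector $(0,1)^{\mathsf{T}}$ by $\gamma_0^{-1}(1,0)^{\mathsf{T}}$ makes your construction coincide with the paper's.
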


\begin{proof}
Let $(x,0)^{\mathsf{T}} \in\Lambda$. Let $S$ be the stabilizer of $(x,0)^{\mathsf{T}} $ in $\Gamma$. Choose $A=\begin{bmatrix}
    a & b\\ c & d
\end{bmatrix}\in\Gamma\setminus S$ with $c\neq 0$. We know $S=\left\langle\begin{bmatrix}
    1& \alpha\\ 0& 1
\end{bmatrix}\right\rangle.$

Then consider $ASA^{-1}(x,0)^{\mathsf{T}} =\{x(1-n\alpha ac,-n\alpha c^2)\in \R^2: n\in \Z\}$. This means that $$J\supset \pi_2\left(ASA^{-1}(x,0)^{\mathsf{T}} \right)\cap \R_+=\{xn\alpha c^2:n\in \N\}.$$ This shows that $\sup_{n\in\N} \{\zeta_{n+1}-\zeta_n\}\leq \alpha xc^2.$

\end{proof}

Next, we define a function on the set $J$ which will help count the number of points in $\Lambda$ with the same height up the action of $\begin{bmatrix}
    1& \alpha\\ 0& 1
\end{bmatrix}$ on $\Lambda$.

\begin{Definition}\label{Def: geometric Euler Totient function}
    Given $\Lambda$ as above, we define $\phi:J\rightarrow\R$ by 
    \begin{equation}
        \phi(j)=\#\left\{x\in \R: (x,j)\in \Lambda \text{ and } 0\leq x< \alpha j\right\}
    \end{equation}
    
    \end{Definition}

In particular notice that if $\Gamma=SL(2,\Z)$ and $\Lambda=SL(2,\Z)(1,0)^{\mathsf{T}}$, then $\alpha=1$, $J=\N$ and $$\phi(j)=\#\left\{x\in \R: (x,j)\in \Lambda \text{ and } 0\leq x< \alpha j\right\}=\#\left\{n\in \N: \gcd(n,j)=1 \text{ and } 0\leq n<j\right\}$$
Which is exactly the classical Euler-totient function.

Notice also that since $\begin{bmatrix}
    1 & \alpha\\ 0 & 1
\end{bmatrix}\in \Gamma$, any horizontal segment $[a,b)\times \{j\}$ where $b-a=\alpha j$, contains exactly $\phi(j)$ points.

The following lemma will give a different way to compute $\phi$ in terms of determinants of pair of vector in $\Lambda\times\Lambda.$ We will use this interpretation of $\phi$ in \S\ref{L^2 bounds}.

\begin{Lemma}
    Consider the following equivalence relation on $\Lambda\times\Lambda$, $(u_1,u_2)\sim(v_1,v_2)$ if there exists $\gamma\in \Gamma$ such that $\gamma u_1=v_1$ and $\gamma u_2=v_2$. Define
    $D_j=\{(u_1,u_2)\in \Lambda\times\Lambda/\sim:\abs{\operatorname{det}(u_1,u_2)}=j \text{ and } (u_1,u_2)\sim(e_1,v) \text{ for some } v\in\Lambda\}.$ Then, $\#D_j=2\phi(j)$ if $j\in J$, $\#D_0=2$, and $D_j=0$ otherwise.
\end{Lemma}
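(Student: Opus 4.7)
The plan is to reduce the count of classes in $D_j$ to counting orbits of the stabilizer of $e_1$ in $\Gamma$ on vectors of prescribed height in $\Lambda$.

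First, observe that by the very definition of $D_j$, every class it contains admits a representative of the form $(e_1,v)$ with $v\in\Lambda$, and the condition $|\det(e_1,v)|=|v_y|=j$ forces $v_y=\pm j$. Two such representatives $(e_1,v)$ and $(e_1,v')$ are equivalent if and only if there is $\gamma\in\Gamma$ fixing $e_1$ with $\gamma v=v'$. An element of $\Gamma$ fixing $e_1$ must be upper unipotent, so by the choice of $\alpha$ one has $\operatorname{Stab}_\Gamma(e_1)=\langle h_\alpha\rangle$ with $h_\alpha=\begin{bmatrix}1&\alpha\\0&1\end{bmatrix}$ (note $-\Id$ does \emph{not} fix $e_1$). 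Hence $\#D_j$ equals the number of $\langle h_\alpha\rangle$-orbits on $\{v\in\Lambda:|v_y|=j\}$, which I treat in three cases.

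For $j\in J$: the action of $h_\alpha$ on $\{v\in\Lambda:v_y=j\}$ is translation by $(\alpha j,0)$, with fundamental domain $\{0\le v_x<\alpha j\}$, so by Definition~\ref{Def: geometric Euler Totient function} there are exactly $\phi(j)$ orbits. For $v_y=-j$, the involution $v\mapsto -v$ is a bijection between $\{v\in\Lambda:v_y=-j\}$ and $\{v\in\Lambda:v_y=j\}$ (using $-\Id\in\Gamma$, so $\Lambda=-\Lambda$), and since $-\Id$ commutes with $h_\alpha$ this bijection intertwines the two $\langle h_\alpha\rangle$-actions; thus the bottom half-plane contributes another $\phi(j)$ orbits, giving $\#D_j=2\phi(j)$. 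For $j>0$ with $j\notin J$, since $J=\pi_2(\Lambda)\cap\R_+$ and $\Lambda=-\Lambda$, no $v\in\Lambda$ has $|v_y|=j$, and $D_j=\emptyset$. For $j=0$: the horizontal vectors in $\Lambda$ all lie in the direction of $\pm e_1$; since $\Lambda$ is the set of visible holonomy vectors (at most one representative per direction) and $e_1\in\Lambda$, $-\Id\in\Gamma$, one has $\Lambda\cap(\R\times\{0\})=\{e_1,-e_1\}$. The element $h_\alpha$ fixes every horizontal vector pointwise, so these give two distinct orbits and $\#D_0=2$.

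The main obstacle is the $j=0$ case: one must use the visibility hypothesis to rule out additional horizontal vectors of $\Lambda$ that could \emph{a priori} be produced by hyperbolic elements of $\Gamma$ preserving the horizontal axis (i.e., upper-triangular elements with diagonal $\neq\pm 1$). Visibility—together with the normalization that $e_1$ is a shortest horizontal saddle connection—is what forces the horizontal slice of $\Lambda$ to reduce to $\{\pm e_1\}$. The remaining cases amount to straightforward fundamental-domain bookkeeping, where one should also verify that the half-open convention in Definition~\ref{Def: geometric Euler Totient function} is consistent with the one induced on the height $-j$ slice by the $-\Id$ symmetry, which follows from $h_\alpha$-invariance of $\Lambda$ at the two boundary $x$-values.
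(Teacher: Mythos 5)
Your proof is correct and takes essentially the same approach as the paper: normalize the first coordinate of a representative to $e_1$ and count orbits of the stabilizer of $e_1$ in $\Gamma$ (the cyclic group generated by $\begin{bmatrix}1&\alpha\\0&1\end{bmatrix}$) on the height-$\pm j$ slices of $\Lambda$. You are slightly more explicit than the paper about why visibility forces the $j=0$ slice of $\Lambda$ to be exactly $\{\pm e_1\}$, and about the $-\Id$ symmetry pairing the $y=j$ and $y=-j$ slices, but the underlying argument is the same.
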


\begin{proof}
    Let $j\geq0$ and suppose that $(u_1,u_2)\in D_j$. Choose $\gamma\in \Gamma$ such that $\gamma u_1=e_1$. Since $j=\abs{\det(u_1,u_2)}=\abs{\det(e_1,\gamma u_2)}= \abs{\pi_2(\gamma u_2)}$, it follows that if $j\notin J$, then $D_j=0$. If $j=0$, it follows that $e_1$ and $\gamma u_2$ are collinear. This then implies $\gamma u_2=\pm e_1.$ Therefore, $D_0=2$. Finally, if $j\in J$, then $\gamma u_2$ belongs the horizontal lines $y=\pm j$. By applying $\begin{bmatrix}
        1& \alpha\\ 0 & 1
    \end{bmatrix}$ to $e_1$ and $\gamma u_2$ some integer multiple of times, we get a unique element $(e_1, v)$ where the $x$-coordinate of $v$ is between $0$ and $\alpha j$. We can do a similar procedure for $e_1$ and $-\gamma u_2$. This shows $D_j=2\phi(j)$.
\end{proof}

Next, we study the statistics of the function $\phi$.

\begin{Lemma}\label{cor: Asymtotics of Geometric Euler totitent function}
    Let $\Lambda$ be as above and $R>0$. There is a constant $c_\omega>0$ such that

\begin{enumerate}
    \item $\lim_{R\rightarrow \infty}R^{-2}\sum_{J_{R}}\phi(j)=c_\omega;$
    \item $\lim_{R\rightarrow \infty}R^{-1}\sum_{J_{R}}\frac{\phi(j)}{j}=2c_\omega.$
\end{enumerate}
    
\end{Lemma}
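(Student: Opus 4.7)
The plan is to reinterpret $\sum_{j\in J_R}\phi(j)$ as a count of holonomy vectors in a dilated triangular region, invoke quadratic growth of holonomy vectors on a lattice surface to obtain (1), and then apply Lemma~\ref{Lem: Abel on Holonomy Vectors} to deduce (2).

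First I would set
\[
T_1 \;=\; \{(x,y)\in\R^2 : 0 \le x<\alpha y,\ 0 < y<1\},\qquad T_R \;=\; R\cdot T_1,
\]
and observe that, directly from Definition~\ref{Def: geometric Euler Totient function} together with $J=\pi_2(\Lambda)\cap\R_+$,
\[
\sum_{j\in J_R}\phi(j) \;=\; \#\bigl(\Lambda\cap T_R\bigr).
\]
Since $\Area(T_1)=\int_0^1\alpha y\,dy=\alpha/2$, part (1) reduces to a quadratic asymptotic for $\#(\Lambda\cap R T_1)$.

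Next I would appeal to the quadratic growth of holonomy vectors on a lattice surface. Because $\Lambda=\bigsqcup_{i=1}^{m}\Gamma v_i$ and $\Gamma$ is a lattice in $SL(2,\R)$, Veech's work \cite{Vee} combined with equidistribution of the horocycle flow on $SL(2,\R)/\Gamma$ produces a Siegel--Veech constant $c_{SV}=c_{SV}(\omega)>0$ such that, for every bounded Jordan-measurable $B\subset\R^2\setminus\{0\}$,
\[
\#\bigl(\Lambda\cap RB\bigr) \;=\; c_{SV}\,\Area(B)\,R^2 + o(R^2) \qquad (R\to\infty).
\]
To handle the fact that $\overline{T_1}$ contains the origin, I would truncate: for $\eps>0$ let $T_1^{>\eps}=T_1\cap\{y>\eps\}$ and split
\[
\#\bigl(\Lambda\cap T_R\bigr) \;=\; \#\bigl(\Lambda\cap RT_1^{>\eps}\bigr) + \#\bigl(\Lambda\cap R(T_1\setminus T_1^{>\eps})\bigr).
\]
The first term equals $c_{SV}\,\tfrac{\alpha}{2}(1-\eps^2)R^2+o(R^2)$ by the display above; the second is $O(\eps^2 R^2)$, because $R(T_1\setminus T_1^{>\eps})$ is contained in the disc of radius $\eps R\sqrt{1+\alpha^2}$ and Masur's theorem \cite{Mas} furnishes a quadratic upper bound on counts in discs. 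Letting $R\to\infty$ and then $\eps\to 0$ yields
\[
\sum_{j\in J_R}\phi(j) \;=\; \tfrac{\alpha}{2}\,c_{SV}\,R^2 + o(R^2),
\]
which is (1) with $c_\omega:=\tfrac{\alpha}{2}c_{SV}>0$.

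For (2), I would apply Lemma~\ref{Lem: Abel on Holonomy Vectors} to the non-negative function $g=\phi$ on $J$. The only technicality is the hypothesis $J\subset[1,\infty)$ in that lemma; by Proposition~\ref{Prop Discreteness of heights} the set $J\cap(0,1)$ is finite, so discarding its finitely many elements changes the partial sums by a bounded amount and affects neither limit. Then (1) supplies the input $R^{-2}\sum_{j\in J_R}g(j)\to c_\omega$, and Lemma~\ref{Lem: Abel on Holonomy Vectors} delivers $R^{-1}\sum_{j\in J_R}g(j)/j\to 2c_\omega$, proving (2).

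The main obstacle is securing the quadratic asymptotic on the triangular (non-circular) region $T_1$ whose closure contains the origin; the $\eps$-truncation above, paired with Masur's quadratic upper bound near the cusp of the region, is what cleans this up. An alternative would be to invoke a direct horocycle-equidistribution argument yielding the asymptotic for dilations of any Jordan-measurable region in a single step, but the truncation approach keeps the cited inputs minimal.
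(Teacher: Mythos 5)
Your proposal is correct and follows essentially the same route as the paper: both identify $\sum_{j\in J_R}\phi(j)$ with $\#(\Lambda\cap T_R)$ for the dilated triangle, invoke Veech's quadratic counting asymptotic to get part (1), and then feed the result into Lemma~\ref{Lem: Abel on Holonomy Vectors} for part (2). The only difference is that you spell out the $\eps$-truncation near the origin (and the harmless finiteness of $J\cap(0,1)$ vis-\`a-vis the $J\subset[1,\infty)$ hypothesis of the Abel lemma), details the paper elides by citing \cite{Vee} directly.
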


\begin{proof}
Define $T_R=\{(x,y)\in\R^2: 0<y<R, 0\leq x<\alpha y\}$. Notice that $\sum_{j\in J_R}\phi(j)=\#\left(T_R\cap \Lambda\right).$ Veech \cite{Vee} tells us that 
\begin{align*}
\lim_{R\rightarrow\infty}\frac{\sum_{j\in J_R}\phi(j)}{R^2}&=\lim_{R\rightarrow\infty}\frac{\#\left(T_R\cap \Lambda\right)}{R^2}= c_\omega>0,
\end{align*}

where $c_\omega$ is a constant only dependent on $\omega$.

The second statement follows from the first part together with Lemma \ref{Lem: Abel on Holonomy Vectors}.
    
\end{proof}

\section{Reduction of Fourth Condition}


According to the fourth condition of Lemma~\ref{Lem: Cheung-Quas}, we need to bound the measure of the set of those $g\Gamma\in \mathcal{L}_{a_n}$ with the property that before returning to $\mathcal{L}_{a_n}$ exactly $N$ times, they visited $\mathcal{L}\setminus\mathcal{L}_{a_n}$ exactly once.

In the context of the $\omega$-BCZ-map we have a geometric viewpoint provided by Cheung-Quas \cite{CheQua}.

\begin{Lemma}\label{Lem: fourth condition identifies}
    Let $g\Gamma\in \mathcal{L}_h$. Let $s_1<s_2<...$ be the slopes of vectors in $g\Lambda \cap \{(x,y)\in \R^2: 0<x\leq h, 0<y\}$. The $N$th return time of $g\Gamma$ to $\mathcal{L}_h$ is the $(N+1)$th return time of $g\Gamma$ to $\mathcal{L}$ if and only if $\#\{(x,y)\in g\Lambda: h<x\leq1, y<s_Nx\}=1.$
\end{Lemma}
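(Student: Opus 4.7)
The plan is to translate the dynamical statement into a statement about slopes of holonomy vectors in wedge-shaped regions, after which the claim reduces to straightforward bookkeeping.

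First I would pin down the correspondence between return times and slopes. The unstable horocycle flow acts on column vectors by $u_t(x,y)^{\mathsf{T}} = (x,\, y - tx)^{\mathsf{T}}$, so a vector $(x,y) \in g\Lambda$ with $x > 0$ and $y > 0$ crosses the horizontal axis at $t = y/x$; at that instant the translated set of holonomy vectors $u_t(g\Lambda)$ contains $(x, 0)$. The condition $u_t(g\Gamma) \in \mathcal{L}_h$ is exactly that $u_t(g\Lambda)$ meets $(0, h] \times \{0\}$, i.e.\ that $x \in (0, h]$, and conversely every return to $\mathcal{L}_h$ arises this way. Thus the successive return times of $g\Gamma$ to $\mathcal{L}_h$ are, in increasing order, precisely the slopes $y/x$ of vectors $(x, y) \in g\Lambda$ with $0 < x \leq h$ and $y > 0$. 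In particular the $N$th return time to $\mathcal{L}_h$ equals $s_N$, and the same reasoning with $h$ replaced by $1$ shows that the return times to $\mathcal{L}$ are the ordered slopes of vectors in $g\Lambda \cap ((0, 1] \times \R_+)$.

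Because $(0, h] \subset (0, 1]$, every return to $\mathcal{L}_h$ is a return to $\mathcal{L}$, and the \emph{extra} returns to $\mathcal{L}$ in the time window $(0, s_N]$ are contributed exactly by vectors $(x, y) \in g\Lambda$ with $h < x \leq 1$ and $0 < y/x \leq s_N$. Denoting this count by $k$, the total number of returns to $\mathcal{L}$ in $(0, s_N]$ is $N + k$, so the $N$th return to $\mathcal{L}_h$ coincides with the $(N+1)$th return to $\mathcal{L}$ if and only if $k = 1$. Visibility of the elements of $\Lambda$ (each direction hosts a unique shortest vector) forces distinct positive-slope vectors to have distinct slopes, so no vector in the strip $h < x \leq 1$ has slope exactly $s_N$; consequently the inequality $y/x \leq s_N$ can be relaxed to $y < s_N x$, rewriting $k = 1$ as the stated equality.

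There is no real obstacle to the argument — the content is entirely in the slope-return dictionary. The only point that warrants care is the endpoint behavior: the initial time $t = 0$ must not be counted as a return (so the first return time is $s_1$, not $0$), and the set appearing in the lemma should be interpreted with the implicit restriction $y > 0$, since only positive slopes correspond to forward return times.
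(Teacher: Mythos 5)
Your argument is correct and is precisely the slope--return-time dictionary that the paper invokes (attributing the geometric viewpoint to Cheung--Quas) without writing out a proof of its own. The two technical points you flag --- the implicit restriction $y>0$ in the counted set, and using visibility of $\Lambda$ to rule out a vector with $h<x\leq 1$ having slope exactly $s_N$ so that $y/x\leq s_N$ may be replaced by $y<s_Nx$ --- are exactly what is needed to make the stated equivalence airtight.
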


Lemma \ref{Lem: fourth condition identifies} tells us that we need to bound below the measure of lattices in $\mathcal{L}_h$ where we have exactly one point in the rectangle with width $1-h$ and height approximately equal $s_N$. It follows from the ergodicity of the unstable horocycle flow that $\frac{1}{N}s_N$ converges to $2c_\omega$ when $N\rightarrow\infty.$ With this in mind, we define the following quantities:

Let $\beta>0$ and define $c_n=\frac{\beta}{1-a_n}$, where $a_n$ are as in \S\ref{sec: hyp 123}. Define $N_n=\floor{c_n}$. Then by the argument above, for $n$ large, the slope of the $N_n$-th vector of $g\Lambda$ is approximately $2c_\omega N_n.$ Therefore, the set $D_{0,n}=\{(s,t)\in\Omega_{a_n}: \text{The } N_n\text{-th horocycle return time of } g_{s,t}\Lambda \text{ to } \mathcal{L}_{a_n} \text{ is between } c_\omega N_n \text{ and } 4c_\omega N_n\}$ has measure converging to $1$ as $n$ tends to $\infty$.

Define $B_{1,n}=(a_n,1]\times (0, c_\omega c_n)$ and $B_{2,n}=(a_n,1]\times (0, 4c_\omega c_n)$.

In \S\ref{Sec: L^1 bounds}, we will show that the average number of points $B_{1,n}$ is bounded below by a constant multiplied by the area of $B_{1,n}$ and, in \S\ref{L^2 bounds}, we will show that the event that $B_{2,n}$ contains two or more points is extremely rare when we restrict our view to compact subsets of $\Omega$. To be more precise, we show the following two claims:

Let $F_{1,n},F_{2,n}:\Omega\rightarrow\R$ be given by $F_{1,n}(s,t)=\#(p_{s,t}\Lambda \cap B_{1,n})$ and $F_{2,n}(s,t)=\#(p_{s,t}\Lambda \cap B_{2,n})$.

\begin{Claim}\label{Claim: small box}
    Let $s_0\in (0,1)$. Define $\Omega^{0}=\{(s,t)\in\Omega: s\geq s_0\}.$ Then, there exists $c_1>0$ such that for $n$ large enough,
    $$\int_{\Omega^0}F_{1,n}(s,t) dm(s,t)\geq c_1\beta.$$
    
\end{Claim}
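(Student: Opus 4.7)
The plan is to apply Theorem~\ref{Prop Points in box} directly to the rectangle $A = B_{1,n}$, and then to extract the asymptotic behavior of the main term $I_2$ using the height statistics from Lemma~\ref{cor: Asymtotics of Geometric Euler totitent function}. Setting $a = a_n$, $b = 1$, and $c = c_\omega c_n = c_\omega \beta/(1 - a_n)$, the rectangle $A = B_{1,n}$ has width $b - a = 1 - a_n$. Since $a_n \to 1$, for all sufficiently large $n$ one has $1 - a_n < \zeta_1 \alpha s_0$, so the hypothesis of the theorem is satisfied. The lower-bound part then gives
\begin{equation*}
\int_{\Omega^0} F_{1,n}(s,t)\, dm(s,t) \;\geq\; \frac{2(1 - a_n)}{\alpha}\, I_2\!\left(c_\omega c_n,\, s_0\right).
\end{equation*}

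The next step is to establish the asymptotic $I_2(c, s_0) = K(s_0)\, c\,(1 + o(1))$ as $c \to \infty$ for some constant $K(s_0) > 0$. Based on the strategy of \S\ref{Sec: L^1 bounds}, $I_2$ should arise from applying Lemma~\ref{Lem: average number of points in an interval from a periodic sequence} at each height $\zeta_k \in J_c$ (with $k = \phi(\zeta_k)$), integrating the resulting $1/\zeta_k$ contributions over $s \in [s_0,1]$, and summing. The sum $\sum_{\zeta_k \in J_c} \phi(\zeta_k)/\zeta_k$ is asymptotic to $2 c_\omega c$ by Lemma~\ref{cor: Asymtotics of Geometric Euler totitent function}(2), and the $s$-average contributes a positive $s_0$-dependent factor, yielding the linear growth. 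Substituting $c = c_\omega \beta/(1 - a_n)$:
\begin{equation*}
\frac{2(1-a_n)}{\alpha}\, I_2(c_\omega c_n, s_0) \;=\; \frac{2(1-a_n)}{\alpha}\cdot K(s_0)\cdot \frac{c_\omega \beta}{1-a_n}(1 + o(1)) \;=\; \frac{2 c_\omega K(s_0)}{\alpha}\, \beta\,(1 + o(1)).
\end{equation*}
Choosing any $c_1 \in \bigl(0,\, 2 c_\omega K(s_0)/\alpha\bigr)$ then gives the claim for all $n$ large enough.

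The main obstacle lies in this asymptotic for $I_2$. Since the explicit formula for $I_2$ is postponed to \S\ref{Sec: L^1 bounds}, one must verify from that formula that: (i) $I_2$ really is the linear-in-$c$ leading term, not a lower-order correction that happens to be absolutely bounded by $I_1 + I_3$; (ii) the $s_0$-dependent coefficient $K(s_0)$ is strictly positive, and in particular does not degenerate as $s_0$ varies in a compact subinterval of $(0,1)$; and (iii) the $o(1)$ error in Lemma~\ref{cor: Asymtotics of Geometric Euler totitent function}(2) is uniform enough that multiplication by the shrinking prefactor $1 - a_n$ does not swamp the main term. All three checks reduce to Abel summation---precisely the content of Lemma~\ref{Lem: Abel on Holonomy Vectors}---applied to the discrete set of heights $J$ with multiplicities $\phi(\zeta_k)$, so once the explicit form of $I_2$ is in hand the remaining work is bookkeeping.
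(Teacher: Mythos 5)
Your proposal follows essentially the same route as the paper: apply Theorem~\ref{Prop Points in box} to $A = B_{1,n}$, observe that the theorem's lower bound $\frac{2(b-a)}{\alpha}I_2$ is the relevant term, and show via Lemma~\ref{Lem: Abel on Holonomy Vectors} and Lemma~\ref{cor: Asymtotics of Geometric Euler totitent function} that $I_2$ grows linearly in $c$, so that the factor $(1-a_n)$ in front cancels against the $1/(1-a_n)$ hidden in $c_n$, leaving a constant multiple of $\beta$. This is precisely what the paper does in Corollary~\ref{Cor: bounding L^1} (which gives $I_2 \sim c_\omega c_n(1-s_0)$ and shows $I_1,I_3$ stay bounded) followed by Corollary~\ref{cor: first bound}; your ``main obstacle'' paragraph flags exactly the checks the paper carries out there.
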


and

\begin{Claim}\label{Claim: Big box}
    With notation as in Claim \ref{Claim: small box}, for $n$ large enough we have that
    $$\int_{\Omega^0} F_{2,n}(s,t)\mathbb{1}_{\{F_{2,n}(s,t)>1\}}dm(s,t)\leq c_2\beta^2.$$
\end{Claim}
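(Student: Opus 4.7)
The plan is to use the inequality $F_{2,n}\mathbb{1}_{\{F_{2,n}>1\}}\le F_{2,n}(F_{2,n}-1)$ (valid since $k\le k(k-1)$ whenever $k\ge 2$) and expand the right-hand side as a sum over ordered distinct pairs of points of $p_{s,t}\Lambda\cap B_{2,n}$. Interchanging with the integral reduces the claim to showing
\[
\sum_{\substack{(v_1,v_2)\in\Lambda^2\\ v_1\ne v_2}}\int_{\Omega^0}\mathbb{1}_{B_{2,n}}(p_{s,t}v_1)\,\mathbb{1}_{B_{2,n}}(p_{s,t}v_2)\,dm(s,t)\;\le\;c_2\beta^2.
\]
Since elements of $\Lambda$ are shortest in their directions, a distinct linearly dependent pair must take the form $\{v,-v\}$, and such pairs contribute zero because $B_{2,n}$ sits in the open upper half-plane. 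Only linearly independent pairs remain.

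For each such pair, write $v_i=(x_i^{(0)}+n_i\alpha j_i,j_i)$ with $x_i^{(0)}\in[0,\alpha j_i)$ an orbit representative and $n_i\in\Z$, and integrate in $t$ at fixed $s$, mimicking the proof of Theorem~\ref{Prop Points in box}. For $n$ large enough that $1-a_n<s_0\alpha\zeta_1$, the condition $p_{s,t}v_i\in B_{2,n}$ picks out a unique $n_i$ per $t$, and the set of valid $t$ is an interval of length $(1-a_n)/j_i$ placed at an offset determined by $(x_i^{(0)},j_i)$ within the common $t$-period $\alpha s$. Averaging the intersection of two such intervals over the $\phi(j_1)\phi(j_2)$ orbit-representative pairs should give the expected pairwise overlap $(1-a_n)^2/(\alpha s j_1 j_2)$. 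Summing over heights $j_1,j_2\in J_{4c_\omega c_n s}$ and integrating over $s\in[s_0,1]$ produces
\[
\int_{\Omega^0}F_{2,n}(F_{2,n}-1)\,dm \;\lesssim\; \frac{(1-a_n)^2}{\alpha^2}\int_{s_0}^1\frac{1}{s}\Bigl(\sum_{j\in J_{4c_\omega c_n s}}\frac{\phi(j)}{j}\Bigr)^{2}\,ds,
\]
and Lemma~\ref{cor: Asymtotics of Geometric Euler totitent function}(2) gives $\sum_{j\in J_R}\phi(j)/j\sim 2c_\omega R$, so the inner sum is of order $c_n s$. The identity $(1-a_n)c_n=\beta$ collapses the prefactor $(1-a_n)^2c_n^2$ to $\beta^2$, and the remaining $s$-integral contributes only an $s_0$-dependent constant.

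The main obstacle is justifying the averaged overlap formula. The two $t$-intervals share the same period $\alpha s$, so their pairwise intersection is a deterministic function of the offsets; the replacement by the expected value $(1-a_n)^2/(\alpha s j_1 j_2)$ is an averaging statement for the sum over orbit representatives, not a pointwise equality. The delicate case is $j_1=j_2$ with orbit representatives close modulo $\alpha j_1$, where the overlaps are anomalously large. I would control this along the lines of Lemma~\ref{Lem: average number of points in an interval from a periodic sequence}: apply the same periodic change of variable to the double sum to get an exact identity, then bound the near-diagonal contribution by the first moment $\int F_{2,n}\,dm=O(\beta)$, which is absorbed into $O(\beta^2)$ provided $\beta$ is bounded, giving the claim for all $n$ sufficiently large.
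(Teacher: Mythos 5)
Your proposal takes a genuinely different route from the paper, and while the framing is sensible, it contains two gaps that together leave the claim unproved.

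The paper does not attempt a direct pair correlation computation on the two-dimensional transversal $\Omega^0$. Instead it observes that $W\mathbb{1}_{\{W>1\}}\leq W^2-W$, then \emph{thickens} $\Omega^0$ into a genuinely three-dimensional set $K\subset SL(2,\R)/\Gamma$ by flowing by $u_r$ for $\abs{r}\leq r_*/2$ (using the Athreya--Chaika lower bound $r_*$ on return times to guarantee the thickening is injective and that $W\leq H$ pointwise on $K$), extends the resulting integral over $K$ to all of $SL(2,\R)/\Gamma$ by nonnegativity, and then invokes the Burrin--Fairchild second-moment bound \cite{BurFai} for Siegel--Veech transforms on the full homogeneous space. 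All of the difficult equidistribution content is outsourced to that theorem. Your approach tries to stay on the section and prove the pair correlation estimate by hand, which is exactly the hard part the paper is engineered to avoid.

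Two concrete problems with your argument. First, the ``averaged overlap'' step --- that summing the pairwise interval overlap over the $\phi(j_1)\phi(j_2)$ orbit representatives gives $(1-a_n)^2/(\alpha s\,j_1 j_2)$ --- is not an averaging statement in the sense of Lemma~\ref{Lem: average number of points in an interval from a periodic sequence}. That lemma integrates a \emph{single} indicator over a full $t$-period, which linearizes. Here you have a \emph{product} of two indicators for the same $t$, and the $t$-integral of the product depends on the relative offsets of the two interval families; these offsets are determined by the arithmetic of $\Lambda$ (for $SL(2,\Z)$ they are the coprime residues mod $j$), and there is no a priori reason they behave like independent uniform variables. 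You flag this yourself as ``the main obstacle,'' but what you need is precisely a pair correlation result for $\Lambda$ on thin boxes, i.e., the content of \cite{BurFai}. Without it the displayed bound $\lesssim (1-a_n)^2\alpha^{-2}\int (1/s)\left(\sum\phi(j)/j\right)^2 ds$ is a heuristic, not an inequality.

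Second, the near-diagonal control is not only unproved but points in the wrong direction. You propose bounding the $j_1=j_2$, close-representative contribution by the first moment $\int_{\Omega^0}F_{2,n}\,dm=O(\beta)$ and say this is ``absorbed into $O(\beta^2)$ provided $\beta$ is bounded.'' For the argument in the proof of Condition~(4) one needs $c_1\beta-3c_2\beta^2>0$ for \emph{small} $\beta$; there, $\beta^2\ll\beta$, so an $O(\beta)$ error term in the upper bound for Claim~\ref{Claim: Big box} is fatal --- it would swamp the main term $c_1\beta$ and the sign of $c_1\beta-3c_2\beta^2$ could no longer be controlled. The near-diagonal contribution must itself be shown to be $O(\beta^2)$, and that requires an actual estimate on how many representative pairs have anomalously small offset, which again is pair-correlation information you have not established.
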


We now use the rest of the section to show how Claims \ref{Claim: small box} and \ref{Claim: Big box} complete the proof of condition 4. 

\begin{Proposition}
    $(\mathcal{L},T,m)$ satisfy Condition (4) from Lemma \ref{Lem: Cheung-Quas}.
\end{Proposition}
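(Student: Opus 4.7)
The plan is to combine Claims \ref{Claim: small box} and \ref{Claim: Big box} via a second-moment (Paley--Zygmund style) argument to produce a definite lower bound on the $m$-measure of
\[
E_n \ :=\ \{(s,t)\in\Omega^0\cap\Omega_{a_n}\cap D_{0,n}: F_{1,n}(s,t)=F_{2,n}(s,t)=1\},
\]
and then to identify $E_n$ with the return-time event required in hypothesis $(4)$ via Lemma \ref{Lem: fourth condition identifies}.

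First, since $B_{1,n}\subseteq B_{2,n}$ we have $F_{1,n}\leq F_{2,n}$ pointwise, so $\{F_{1,n}\geq 2\}\subseteq\{F_{2,n}\geq 2\}$. Claim \ref{Claim: Big box} then controls the ``overcount'' piece of Claim \ref{Claim: small box}:
\[
\int_{\Omega^0}F_{1,n}\,\mathbb{1}_{\{F_{1,n}\geq 2\}}\,dm \ \leq\ \int_{\Omega^0}F_{2,n}\,\mathbb{1}_{\{F_{2,n}\geq 2\}}\,dm \ \leq\ c_2\beta^2.
\]
Subtracting from Claim \ref{Claim: small box} yields $m(\{F_{1,n}=1\}\cap\Omega^0)\geq c_1\beta-c_2\beta^2$, and Markov's inequality inside Claim \ref{Claim: Big box} gives $m(\{F_{2,n}\geq 2\}\cap\Omega^0)\leq c_2\beta^2/2$. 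Fixing once and for all a sufficiently small $\beta\in(0,c_1/(3c_2))$ depending only on $c_1$ and $c_2$, I obtain $m(\{F_{1,n}=F_{2,n}=1\}\cap\Omega^0)\geq c_1\beta/2$ uniformly in $n$. Since $m(D_{0,n})\to 1$ and $m(\Omega_{a_n})=a_n^2\to 1$, intersecting with $D_{0,n}\cap\Omega_{a_n}$ costs only $o(1)$ in measure, so $m(E_n)\geq c_1\beta/4$ for $n$ sufficiently large.

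The next step is the geometric identification. On $D_{0,n}$ the $N_n$th return slope $s_{N_n}$ lies in $[c_\omega N_n,4c_\omega N_n]$, so the wedge $W_{s,t}=\{(x,y):a_n<x\leq 1,\ 0<y<s_{N_n}x\}$ appearing in Lemma \ref{Lem: fourth condition identifies} is sandwiched between $B_{1,n}$ and $B_{2,n}$: the upper inclusion $W_{s,t}\subseteq B_{2,n}$ follows from $s_{N_n}x\leq s_{N_n}\leq 4c_\omega c_n$, and the lower inclusion $B_{1,n}\subseteq W_{s,t}$ follows from $s_{N_n}x> c_\omega N_n a_n$, matching the height $c_\omega c_n$ of $B_{1,n}$ up to a vanishing error. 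Hence $F_{1,n}(s,t)\leq \#(p_{s,t}\Lambda\cap W_{s,t})\leq F_{2,n}(s,t)$, so on $E_n$ the wedge contains exactly one point of $p_{s,t}\Lambda$. By Lemma \ref{Lem: fourth condition identifies} the $N_n$th return of $g_{s,t}\Gamma$ to $\mathcal{L}_{a_n}$ equals the $(N_n+1)$th return to $\mathcal{L}$ for every $(s,t)\in E_n$. Normalizing by $m(\mathcal{L}_{a_n})\to 1$ then yields hypothesis $(4)$ with $\tau=c_1\beta/8$.

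The main technical obstacle is the containment $B_{1,n}\subseteq W_{s,t}$: because $c_n=\beta/(1-a_n)$ is strictly larger than $N_n=\floor{c_n}$ while $a_n<1$, the naive comparison $c_\omega c_n\leq c_\omega N_n a_n$ fails by an error of order $1/c_n$. I would repair this by shrinking the vertical height of $B_{1,n}$ by a fixed factor independent of $n$ (or equivalently tightening the lower endpoint of the window defining $D_{0,n}$); the constants $c_1$ and $c_2$ in Claims \ref{Claim: small box} and \ref{Claim: Big box} then shift only by absolute multiplicative factors, and the argument above goes through unchanged.
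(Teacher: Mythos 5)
Your proof takes essentially the same route as the paper: define $D_{0,n}$, use Claims~\ref{Claim: small box} and~\ref{Claim: Big box} to lower-bound the measure where $F_{1,n}=F_{2,n}=1$, intersect with $D_{0,n}$ at $o(1)$ cost, and invoke Lemma~\ref{Lem: fourth condition identifies}. Your closing paragraph is in fact more careful than the paper's treatment, which silently assumes the sandwiching $B_{1,n}\subseteq W_{s,t}\subseteq B_{2,n}$; your observation that the lower inclusion fails by an $O(1/c_n)$ margin and your proposed fix (shrink $B_{1,n}$'s height, or tighten $D_{0,n}$'s lower cutoff, by a fixed factor) is the right repair and leaves the constants unchanged up to absolute factors.
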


\begin{proof}
    This is the same proof provided by Cheung-Quas \cite{CheQua} in the case of the classical BCZ map, but we include it here for completion. Let $$D_{0,n}=\{(s,t)\in\Omega_{a_n}: \text{The } N_n\text{-th horocycle return time of } g_{s,t}\Lambda \text{ to } \mathcal{L}_{a_n} \text{ is between } c_\omega N_n \text{ and } 4c_\omega N_n\}.$$ Since the horocycle flow is ergodic in $SL(2,\R)/\Gamma$ and the average return time to $\mathcal{L}$ is $2c_\omega$, then $m(D_{0,n})$ converges to $1$.
    Define $D_{1,n}=\{(s,t)\in \Omega^0: F_{1,n}(s,t)=1\}$ and $D_{2,n}=\{(s,t)\in \Omega_n: F_{2,n}(s,t)=1\}$.

    Notice that if $(s,t)\in D_{0,n}\cap D_{1,n}\cap D_{2,n}$ then the $N_n-$th return time of $g_{s,t}\Gamma$ to $\mathcal{L}_{a_n}$ is the $(N+1)$-th return time of $g_{s,t}\Gamma$ to $\mathcal{L}$. Therefore, by Lemma \ref{Lem: fourth condition identifies}, it suffices to find a uniform positive lower bound for $m(D_{0,n}\cap D_{1,n}\cap D_{2,n})$ for large enough values of $n$.

Using Claim \ref{Claim: small box} followed by Claim \ref{Claim: Big box}, we get the following lower bound on $m(D_{1,n})$.
    \begin{align*}
       m(D_{1,n})&=\int_{\Omega^0}F_{1,n}\mathbb{1}_{\{F_{1,n}=1\}} dm\\
       &=\int_{\Omega^0}F_{1,n}dm - \int_{\Omega^0} F_1\mathbb{1}_{\{F_{1,n}>1\}}dm\\
       &\geq c_1\beta -\int_{\Omega^0} F_1\mathbb{1}_{\{F_{1,n}>1\}}dm\\
       &\geq c_1\beta -\int_{\Omega^0} F_1\mathbb{1}_{\{F_{2,n}>1\}}dm\\
       &\geq c_1\beta-c_2\beta^2
    \end{align*}

Notice also that $D_{1,n}\cap D_{2,n}= D_1\setminus\{(s,t)\in \Omega^0: F_{2,n}(s,t)>1\}$. Therefore, $m(D_{1,n}\cap D_{2,n})\geq m(D_{1,n})-\int_{\Omega^0}F_{2,n}\mathbb{1}_{F_{2,n}>1}dm\geq c_1\beta-2c_2\beta^2.$ Since $m(D_{0,n})$ converges to $1$, it follows that for $n$ large enough, $m(D_{0,n}\cap D_{1,n}\cap D_{2,n})\geq  c_1\beta-3c_2\beta^2$. By a small enough positive $\beta$, we can ensure that the quantity $c_1\beta-3c_2\beta^2>0$. This completes the proof.
\end{proof}

All that remains now is to show that Claims \ref{Claim: small box} and \ref{Claim: Big box} are true.

\section{Average number of points in thin rectangles}\label{Sec: L^1 bounds}

Fix $s_0\in (0,1)$ and define $\Omega^0=\{(s,t)\in \Omega: s\geq s_0\}.$ Let $a$ and $b$ be real numbers with $b>a$ and let $c>0$. Let $A=[a,b)\times(0,c).$

Our first goal will be to understand how many points are in $A\cap p_{s,t}\Lambda$ where $(s,t)\in \Omega^0$. This is a hard question to answer for any one particular $(s,t)$. Instead we will provide the bounds for the mean on the number of points over all of $\Omega^0$. Let $W(s,t,A)=\#(p_{s,t}\Lambda \cap A)$.

\begingroup
\def\thetheorem{\ref{Prop Points in box}}
\begin{theorem}
    Let $s_0\in (0,1)$ and $A=[a,b)\times (0,c)$, where $b-a<\zeta_1\alpha s_0$. Define $k'=\max\left\{k\in \N:\zeta_k\leq cs_0\right\}$ and $k''=\max\left\{k\in \N: \zeta_k\leq c\right\}$.
    Then
    \begin{equation*}
        \frac{2(b-a)}{\alpha}I_2\leq \int_{\Omega^0}W(s,t,A) dm(s,t)= \frac{2(b-a)}{\alpha}\left( I_1+I_2+I_3\right),
    \end{equation*}

where
\begin{enumerate}
    \item $I_1=\frac{\zeta_{k'+1}-cs_0}{c}\sum_{j\in J_{cs_0}}\frac{\phi(j)}{j},$
    \item 
    $I_2=\sum_{\ell=1}^{k''-1}\left(\frac{\zeta_{\ell+1}-\zeta_{\ell}}{c}\sum_{j\in J_{\zeta_{\ell+1}}}\frac{\phi(j)}{j}\right)-\sum_{\ell=1}^{k'}\left(\frac{\zeta_{\ell+1}-\zeta_{\ell}}{c}\sum_{j\in J_{\zeta_{\ell+1}}}\frac{\phi(j)}{j}\right),$
    \item $I_3=\left(\frac{c-\zeta_{k''}}{c}\right)\sum_{j\in J_c}\frac{\phi(j)}{j}.$
\end{enumerate}
\end{theorem}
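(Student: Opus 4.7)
The plan is to first compute the integrand pointwise and then perform the integration in stages. Writing $p_{s,t}(x,y) = (sx + ty, s^{-1}y)$, the condition $p_{s,t}(x,y) \in A = [a,b) \times (0,c)$ decomposes into $0 < y < cs$ and $sx + ty \in [a,b)$. Only strictly positive heights contribute: for $y = j \in J$ with $j < cs$, I would use that $\begin{bmatrix} 1 & \alpha \\ 0 & 1 \end{bmatrix} \in \Gamma$ preserves $\Lambda$, so the vectors of $\Lambda$ at height $j$ are exactly $\{(x_i + n\alpha j, j) : 1 \leq i \leq \phi(j),\ n \in \Z\}$ for some representatives $x_1 < \cdots < x_{\phi(j)}$ in $[0, \alpha j)$.

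Next, I would integrate in $t$ with $s$ held fixed. The hypothesis $b - a < \zeta_1 \alpha s_0$ combined with $s \geq s_0$ and $j \geq \zeta_1$ gives $b - a < \alpha j s$, which is exactly the condition Lemma \ref{Lem: average number of points in an interval from a periodic sequence} requires. Applying that lemma at each height $j < cs$ and summing gives
\[
\int_{1-\alpha s}^1 W(s,t,A)\, dt = (b-a) \sum_{j \in J,\, j < cs} \frac{\phi(j)}{j}.
\]

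Then I would integrate in $s$. Since $|\Omega| = \int_0^1 \alpha s\, ds = \alpha/2$, one has $dm = (2/\alpha)\, ds\, dt$, so
\[
\int_{\Omega^0} W(s,t,A)\, dm(s,t) = \frac{2(b-a)}{\alpha} \int_{s_0}^1 f(cs)\, ds,
\]
where $f(u) = \sum_{j \in J,\, j < u} \phi(j)/j$. This $f$ is a step function that jumps at each $\zeta_\ell$, so after substituting $u = cs$ I would split $[cs_0, c]$ at the jump points $\zeta_{k'+1}, \ldots, \zeta_{k''}$ and integrate $f$ piecewise: the piece $[cs_0, \zeta_{k'+1}]$ produces $I_1$, the middle pieces $[\zeta_\ell, \zeta_{\ell+1}]$ for $k'+1 \leq \ell \leq k'' - 1$ produce $I_2$ (packaged as a difference of two partial sums as in the statement), and the final piece $[\zeta_{k''}, c]$ produces $I_3$. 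The lower bound $\tfrac{2(b-a)}{\alpha} I_2$ then follows immediately from the equality together with $I_1, I_3 \geq 0$.

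The main obstacle is the bookkeeping in this last step: aligning the indices $k', k''$ with the conventions for $J_{cs_0}$, $J_c$, and $J_{\zeta_{\ell+1}}$ so that the constant value of $f$ on each subinterval matches the indicated partial sum, and handling edge cases such as $k'' \leq k' + 1$ (where $I_2$ is an empty sum) or heights sitting exactly at the endpoints $cs_0$ or $c$. No individual step is hard once the indexing is pinned down; the substantive input is just Lemma \ref{Lem: average number of points in an interval from a periodic sequence} and the $\Z$-translation structure of $\Lambda$ coming from the parabolic generator of $\Gamma$.
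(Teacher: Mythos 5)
Your proposal follows essentially the same route as the paper: reduce to heights via the parabolic generator, apply Lemma~\ref{Lem: average number of points in an interval from a periodic sequence} at each height $j<cs$ after checking $b-a<\zeta_1\alpha s_0\leq \alpha js$, and then integrate the step function $f(cs)=\sum_{j\in J_{cs}}\phi(j)/j$ in $s$ by splitting at the jump points $\zeta_{k'+1},\dots,\zeta_{k''}$ to recover $I_1$, $I_2$, $I_3$, with the lower bound coming from $I_1,I_3\geq 0$. This is the paper's argument, and you even make explicit the hypothesis check for the periodic-sequence lemma that the paper leaves implicit.
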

\addtocounter{theorem}{-1}
\endgroup

\begin{proof}
    Define $G(s,t,A,j)=\#\left(p_{s,t}\Lambda\cap A \cap \{y=\frac{j}{s}\}\right).$ It then follows that
\begin{align*}
     \int_{\Omega_{0}}W(s,t,A) dm&= \frac{2}{\alpha} \int_{s_0}^1\int_{1-\alpha s}^1 \#\left(p_{s,t}\Lambda\cap A\right) dt ds\\
     &= \frac{2}{\alpha} \int_{s_0}^1\int_{1-\alpha s}^1 \sum_{j\in J_{cs}}G(s,t,A,j) dt ds\\
     &= \frac{2}{\alpha} \int_{s_0}^1\sum_{j\in J_{cs}}\int_{1-\alpha s}^1 G(s,t,A,j) dt ds
\end{align*}

Using Lemma \ref{Lem: average number of points in an interval from a periodic sequence}, we get that 
$\int_{1-\alpha s}^1 G(s,t,A,j)=\frac{\phi(j)}{j}(b-a).$

This reduces the computation to

$$\int_{\Omega_{0}}W(s,t,A) dm= \frac{2(b-a)}{\alpha} \int_{s_0}^1 \sum_{j\in J_{cs}} \frac{\phi(j)}{j}ds.$$

The rest of this proof will be used to reduce the integral on the right to the sum of the functions $I_1$, $I_2$, and $I_3$.
\begin{equation}\label{Eq: Control of heigh contribution}
    \int_{s_0}^1 \sum_{j\in J_{cs}} \frac{\phi(j)}{j}ds= \int_{s_0}^{\frac{\zeta_{k'+1}}{c}} \sum_{j\in J_{cs}} \frac{\phi(j)}{j}ds +\sum_{\ell=1}^{k''-k-1}\int_{\frac{\zeta_{k'+\ell}}{c}}^{\frac{\zeta_{k'+\ell+1}}{c}} \sum_{j\in J_{cs}} \frac{\phi(j)}{j}ds+\int_{\frac{\zeta_{k''}}{c}}^1 \sum_{j\in J_{cs}} \frac{\phi(j)}{j}ds
\end{equation}

First notice that if $s\in(s_0,\frac{\zeta_{k'+1}}{c})$, then $$\sum_{j\in J_{cs}} \frac{\phi(j)}{j}=\sum_{j\in J_{cs_0}} \frac{\phi(j)}{j}.$$

Therefore,

$$\int_{s_0}^{\frac{\zeta_{k'+1}}{c}} \sum_{j\in J_{cs}} \frac{\phi(j)}{j}ds=\left(\frac{\zeta_{k'+1}-cs_0}{c}\right)\sum_{j\in J_{cs_0}} \frac{\phi(j)}{j}ds= I_1.$$

If, $s\in(\frac{\zeta_{k''}}{c},1)$, then 
$$\sum_{j\in J_{cs}} \frac{\phi(j)}{j}=\sum_{j\in J_c}\frac{\phi(j)}{j}$$

Therefore,

$$\int_{\frac{\zeta_{k''}}{c}}^1 \sum_{j\in J_{cs}} \frac{\phi(j)}{j}ds=\left(\frac{c-\zeta_{k''}}{c}\right)\sum_{j\in J_{c}} \frac{\phi(j)}{j}ds=I_3.$$

Lastly, if $s\in(\frac{\zeta_{k'+\ell}}{c},\frac{\zeta_{k'+\ell+1}}{c}), $ then

$$\sum_{j\in J_{cs}} \frac{\phi(j)}{j}=\sum_{j\in J_{\zeta_{k'+\ell+1}}} \frac{\phi(j)}{j}.$$

This then helps us reindex the remaining terms of Equation \ref{Eq: Control of heigh contribution}.
\begin{align*}
\sum_{\ell=1}^{k''-k-1}\int_{\frac{\zeta_{k'+\ell}}{c}}^{\frac{\zeta_{k'+\ell+1}}{c}}\sum_{j\in J_{\zeta_{k'+\ell+1}}}\frac{\phi(j)}{j}&=\sum_{\ell=1}^{k''-k'-1}\left(\frac{\zeta_{k'+\ell+1}-\zeta_{k+\ell}}{c}\sum_{j\in J_{\zeta_{k'+\ell+1}}}\frac{\phi(j)}{j}\right)\\
&=\sum_{\ell=1}^{k''-1}\left(\frac{\zeta_{\ell+1}-\zeta_{\ell}}{c}\sum_{j\in J_{\zeta_{\ell+1}}}\frac{\phi(j)}{j}\right)-\sum_{\ell=1}^{k'}\left(\frac{\zeta_{\ell+1}-\zeta_{\ell}}{c}\sum_{j\in J_{\zeta_{\ell+1}}}\frac{\phi(j)}{j}\right)\\&=I_2.
\end{align*}
   
\end{proof}

\begin{corollary}\label{Cor: bounding L^1}
    Let $a_n$ be a sequence of positive numbers increasing to $1$. Let $\beta>0$ and define $c_n=\frac{\beta}{1-a_n}$. Define $A_n=[a_n,1)\times (0,c_n)$. Then, with the notation as in Proposition \ref{Prop Points in box}, as $n\rightarrow \infty$,
    \begin{enumerate}
        \item $I_1$ is bounded,
        \item $I_2\sim c_\omega c_n (1-s_0)$
        \item $I_3$ is bounded.
    \end{enumerate}
\end{corollary}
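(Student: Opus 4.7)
The plan is to handle each of the three parts separately, using two tools throughout: the bounded-gap property of $J$ from Proposition~\ref{Prop: bound on height gaps}, and the linear growth $S(R):=\sum_{j\in J_R}\phi(j)/j\sim 2c_\omega R$ from Lemma~\ref{cor: Asymtotics of Geometric Euler totitent function}(2). Fix once and for all $M>0$ with $\zeta_{k+1}-\zeta_k\le M$ for every $k\ge 1$; such an $M$ exists by Proposition~\ref{Prop: bound on height gaps}.

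Parts (1) and (3) are boundary estimates and follow from the same pattern. For part (1), the definition of $k'$ forces $\zeta_{k'}\le c_n s_0<\zeta_{k'+1}$, and the bounded-gap bound gives $\zeta_{k'+1}-c_n s_0\le M$. Hence the prefactor satisfies $(\zeta_{k'+1}-c_n s_0)/c_n\le M/c_n$, while Lemma~\ref{cor: Asymtotics of Geometric Euler totitent function}(2) gives $S(c_n s_0)=O(c_n)$. Multiplying yields $I_1=O(1)$. The identical argument with $k''$ and $c_n$ in place of $k'$ and $c_n s_0$ gives $I_3=O(1)$.

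For part (2), write
\[
I_2=\frac{1}{c_n}\sum_{\ell=k'+1}^{k''-1}(\zeta_{\ell+1}-\zeta_\ell)\,S(\zeta_{\ell+1}).
\]
I would first replace $S(\zeta_{\ell+1})$ by its asymptotic $2c_\omega\zeta_{\ell+1}$. Given any $\varepsilon>0$, Lemma~\ref{cor: Asymtotics of Geometric Euler totitent function}(2) gives $|S(y)-2c_\omega y|\le \varepsilon y$ for $y$ large; since every $\zeta_{\ell+1}$ in the sum satisfies $\zeta_{\ell+1}>c_n s_0\to\infty$, this replacement introduces an error bounded by $\varepsilon\,c_n\,(\zeta_{k''}-\zeta_{k'+1})\le\varepsilon c_n^2$ inside the sum, hence $O(\varepsilon c_n)$ after the $1/c_n$. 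The main term is
\[
\frac{2c_\omega}{c_n}\sum_{\ell=k'+1}^{k''-1}(\zeta_{\ell+1}-\zeta_\ell)\,\zeta_{\ell+1},
\]
which is a right Riemann sum for $\tfrac{2c_\omega}{c_n}\int_{\zeta_{k'+1}}^{\zeta_{k''}}y\,dy$ on a partition of mesh at most $M$; since $f(y)=y$ has $|f'|\equiv 1$, the Riemann-sum error is bounded by $c_n^{-1}\cdot M\cdot(\zeta_{k''}-\zeta_{k'+1})=O(1)$. Combined with $\zeta_{k'+1}=c_n s_0+O(1)$ and $\zeta_{k''}=c_n+O(1)$, evaluating the integral yields the stated asymptotic, of leading order $c_\omega c_n$ times a positive constant depending only on $s_0$.

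The main obstacle is purely technical: $S$ is a step function, so one cannot cite a standard Riemann-sum convergence theorem for it directly. What makes the approximation work is the combination of (i) the bounded mesh from Proposition~\ref{Prop: bound on height gaps}, which keeps the partition fine on the scale of $c_n$, and (ii) the fact that the $o$-error coming from Lemma~\ref{cor: Asymtotics of Geometric Euler totitent function}(2) gets multiplied by the total length $\zeta_{k''}-\zeta_{k'+1}=O(c_n)$ rather than by the number of summands, so it remains strictly subleading. Every other step is a routine bookkeeping of boundary contributions.
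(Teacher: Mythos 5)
Parts (1) and (3) of your proof use exactly the paper's route: bound the prefactor $(\zeta_{k'+1}-c_ns_0)/c_n$ (respectively $(c_n-\zeta_{k''})/c_n$) by $M/c_n$ via Proposition~\ref{Prop: bound on height gaps}, and use Lemma~\ref{cor: Asymtotics of Geometric Euler totitent function}(2) to see the partial sum grows like $c_n$, so the product is $O(1)$. Nothing to add there.

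Your part (2) is a genuinely different argument from the paper's, and both are correct. After telescoping to $I_2=\tfrac1{c_n}\sum_{\ell=k'+1}^{k''-1}(\zeta_{\ell+1}-\zeta_\ell)S(\zeta_{\ell+1})$, the paper swaps the order of summation to get
$\tfrac{I_2}{c_n}=\tfrac1{c_n^2}\sum_{\ell\le k''-1}(\zeta_{k''}-\zeta_\ell)\tfrac{\phi(\zeta_\ell)}{\zeta_\ell}-\tfrac1{c_n^2}\sum_{\ell\le k'}(\zeta_{k'}-\zeta_\ell)\tfrac{\phi(\zeta_\ell)}{\zeta_\ell}$,
and then reads off the limit from the two parts of Lemma~\ref{cor: Asymtotics of Geometric Euler totitent function} with no further estimates needed. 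You instead substitute the asymptotic $S(\zeta_{\ell+1})\approx 2c_\omega\zeta_{\ell+1}$ and recognize the main term as a right Riemann sum for $\int y\,dy$. That works, and you handle the two error terms correctly: the substitution error is $\varepsilon\zeta_{\ell+1}\le\varepsilon c_n$ pointwise and integrates against total length $O(c_n)$, giving $O(\varepsilon c_n)$ after the $1/c_n$; the Riemann error is $O(M(\zeta_{k''}-\zeta_{k'+1})/c_n)=O(1)$ by bounded mesh. The paper's summation-by-parts is slicker because it avoids both error estimates; your Riemann-sum version is more transparent about why the leading term is an integral. Both need Proposition~\ref{Prop: bound on height gaps} and both parts of Lemma~\ref{cor: Asymtotics of Geometric Euler totitent function}.

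One thing you should check rather than quote: carrying either argument to the end gives $I_2/c_n\to c_\omega(1-s_0^2)$, not the displayed $c_\omega(1-s_0)$. Your integral evaluates to $\tfrac{c_\omega}{c_n}(\zeta_{k''}^2-\zeta_{k'+1}^2)\to c_\omega c_n(1-s_0^2)$; and in the paper's route the second partial sum limits to $s_0\cdot 2c_\omega s_0-c_\omega s_0^2=c_\omega s_0^2$ (not $c_\omega s_0$ as printed), again giving $c_\omega(1-s_0^2)$. The constant in the statement (and the one it propagates to Corollary~\ref{cor: first bound}) looks like a slip; it is harmless downstream because only positivity and the order $\sim c_n$ are used, but you should not assert that your integral ``yields the stated asymptotic'' without verifying the constant.
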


\begin{proof}
    We first proof $(1)$. 
    \begin{align}
        I_1\leq\frac{\zeta_{k'+1}-\zeta_{k'}}{c_n}\sum_{j\in J_{c_ns_0}}\frac{\phi(j)}{j}.
    \end{align}
By Proposition \ref{Prop: bound on height gaps}, we know $\limsup_{c_n\rightarrow\infty}\{\zeta_{k'+1}-\zeta_{k'}\}$ is finite. By Lemma \ref{cor: Asymtotics of Geometric Euler totitent function} we know that $\lim_{n\rightarrow\infty}\frac{1}{c_n}\sum_{j\in J_{c_ns_0}}\frac{\phi(j)}{j}=2c_\omega s_0.$

Therefore, $$\limsup_{c_n\rightarrow\infty} I_1\leq 2c_\omega s_0\limsup_{c_n\rightarrow\infty}\{\zeta_{k'+1}-\zeta_{k'}\},$$ which is a finite number.

The proof that shows that $I_3$ is bounded is identical to the proof that $I_1$ is bounded, so we omit it.

Lastly, we show that $I_2\sim c_\omega c_n (1-s_0)$. Using Proposition \ref{Prop Points in box}, we get the following

\begin{align*}
    \frac{I_2}{c_n}&=\frac{1}{c_n}\sum_{\ell=1}^{k''-1}\left(\frac{\zeta_{\ell+1}-\zeta_{\ell}}{c_n}\sum_{j\in J_{\zeta_{\ell+1}}}\frac{\phi(j)}{j}\right)-\frac{1}{c_n}\sum_{\ell=1}^{k'}\left(\frac{\zeta_{\ell+1}-\zeta_{\ell}}{c_n}\sum_{j\in J_{\zeta_{\ell+1}}}\frac{\phi(j)}{j}\right)\\
    &=\frac{1}{c_n^2}\sum_{\ell=1}^{k''-1}\left(\zeta_{k''}-\zeta_{\ell}\right)\frac{\phi(\zeta_{\ell})}{\zeta_{\ell}}-\frac{1}{c_n^2}\sum_{\ell=1}^{k'}\left(\zeta_{k'}-\zeta_{\ell}\right)\frac{\phi(\zeta_{\ell})}{\zeta_{\ell}}.
    \end{align*}

Proposition $\ref{Prop: bound on height gaps}$ tells us that $\abs{c_n-\zeta_{k''}}$ and ${c_ns_0-\zeta_{k'}}$ are bounded quantities and therefore $\lim\frac{\zeta_{k''}}{c_n}=1$ and $\lim \frac{\zeta_{k'}}{c_ns_0}=1.$ Using Lemma \ref{cor: Asymtotics of Geometric Euler totitent function}, we get the following:

\begin{align*}
\lim_{n\rightarrow\infty} \frac{1}{c_n^2}\sum_{\ell=1}^{k''-1}\left(\zeta_{k''}-\zeta_{\ell}\right)\frac{\phi(\zeta_{\ell})}{\zeta_{\ell}}&= \lim_{n\rightarrow\infty}\left( \frac{\zeta_{k''}}{c_n}\frac{1}{c_n}\sum_{j\in J_{c_n}}\frac{\phi(j)}{j}-\frac{1}{c_n^2}\sum_{j\in J_{c_n}}\phi(j)\right)\\
&=2c_\omega -c_\omega\\
&= c_\omega
\end{align*}

Similarly,

\begin{align*}
    \lim_{n\rightarrow\infty} \frac{1}{c_n^2}\sum_{\ell=1}^{k'}\left(\zeta_{k'}-\zeta_{\ell}\right)\frac{\phi(\zeta_{\ell})}{\zeta_{\ell}}=c_\omega s_0.
\end{align*}

Therefore,
$\lim_{n\rightarrow\infty}\frac{I_2}{c_n}=c\omega(1-s_0).$

This completes the proof.

\end{proof}

\begin{corollary}\label{cor: first bound}
    Let $a_n$, $c_n$, and $A_n$ be as in Corollary \ref{Cor: bounding L^1}. Then,

    $$\lim_{n\rightarrow\infty}\int_{\Omega^0} W(s,t,A_n) dm= \frac{2}{\alpha}c_\omega\beta(1-s_0).$$
\end{corollary}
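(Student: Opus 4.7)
The plan is to combine Theorem~\ref{Prop Points in box} with the asymptotics in Corollary~\ref{Cor: bounding L^1} and then let $n \to \infty$, noting that multiplication by the prefactor $\tfrac{2(1-a_n)}{\alpha}$ kills the two bounded terms and normalizes the middle term to the stated limit.

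First I would verify that Theorem~\ref{Prop Points in box} applies to the rectangle $A_n = [a_n, 1) \times (0, c_n)$ for all sufficiently large $n$. The hypothesis there requires $b - a < \zeta_1 \alpha s_0$; here $b - a = 1 - a_n$, and since $a_n \nearrow 1$ we have $1 - a_n < \zeta_1 \alpha s_0$ for $n$ large. Thus for such $n$,
\[
\int_{\Omega^0} W(s,t,A_n)\, dm(s,t) \;=\; \frac{2(1-a_n)}{\alpha}\bigl(I_1(n) + I_2(n) + I_3(n)\bigr),
\]
where I use $I_1(n), I_2(n), I_3(n)$ to emphasize that the quantities from Theorem~\ref{Prop Points in box} depend on $n$ through $c = c_n$.

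Next I would substitute the asymptotics from Corollary~\ref{Cor: bounding L^1}. Because $I_1(n)$ and $I_3(n)$ are bounded as $n\to\infty$, while $1-a_n \to 0$, their contributions satisfy
\[
\frac{2(1-a_n)}{\alpha}\, I_1(n) \longrightarrow 0 \qquad\text{and}\qquad \frac{2(1-a_n)}{\alpha}\, I_3(n) \longrightarrow 0.
\]
For the middle term, Corollary~\ref{Cor: bounding L^1} gives $I_2(n) \sim c_\omega c_n (1-s_0)$, and since $c_n = \beta/(1-a_n)$,
\[
\frac{2(1-a_n)}{\alpha}\, I_2(n) \;\sim\; \frac{2(1-a_n)}{\alpha}\, c_\omega \cdot \frac{\beta}{1-a_n}(1-s_0) \;=\; \frac{2}{\alpha}\, c_\omega\, \beta\, (1-s_0).
\]
Adding the three contributions yields the claimed limit.

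There is really no substantive obstacle here: the corollary is a direct bookkeeping consequence of Theorem~\ref{Prop Points in box} and Corollary~\ref{Cor: bounding L^1}. The only small point that needs care is to make sure one is allowed to invoke Theorem~\ref{Prop Points in box}, which one verifies using $a_n \to 1$ as above, and to confirm that the $o(1)$ errors in $I_1(n)$, $I_2(n)$, and $I_3(n)$, when multiplied by $\tfrac{2(1-a_n)}{\alpha}$, still tend to zero—but since the asymptotic $I_2(n) = c_\omega c_n (1-s_0) + o(c_n)$ gets multiplied by $(1-a_n) = \beta/c_n$, the error term becomes $o(1)$, so passing to the limit is routine.
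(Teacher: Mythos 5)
Your proof is correct and is precisely the argument the paper intends (though it leaves the proof implicit): invoke Theorem~\ref{Prop Points in box} for $A_n$ once $1-a_n < \zeta_1 \alpha s_0$, multiply the asymptotics of $I_1, I_2, I_3$ from Corollary~\ref{Cor: bounding L^1} by the prefactor $\tfrac{2(1-a_n)}{\alpha}$, and observe that the bounded terms are annihilated while $I_2(n)\sim c_\omega c_n(1-s_0)$ combines with $c_n(1-a_n)=\beta$ to produce $\tfrac{2}{\alpha}c_\omega\beta(1-s_0)$. No gaps.
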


Claim \ref{Claim: small box} follows from Corollary \ref{cor: first bound}.

\section{Bound for Second Moment}\label{L^2 bounds}

Our next goal is to bound $\int_{\Omega^0} W(s,t,A_n)\mathbb{1}_{W(s,t,A_n)>1}dm$, where $A_n$ is a rectangle as in Corollary \ref{Cor: bounding L^1}. Let $A$ be a rectangle as in Proposition \ref{Prop Points in box}. Since $W(s,t,A)$ is a non-negative integer, it follows that $W(s,t,A)\mathbb{1}_{W(s,t,A)>1}\leq W^2(s,t,A)-W(s,t,A)$. So we will focus on controlling $\int_{\Omega^0}W^2(s,t,A_n)-W(s,t,A_n)dm.$ Define $H(r,s,t,A)=\#(u_rp_{s,t}\Lambda\cap A)$. Athreya-Chaika \cite{AthCha2} showed that the return times of elements of $\mathcal{L}$ with respect to $u_r$ are bounded below by some positive constant $r_*$. Define the following two sets:

$$K_+=\left\{u_rp_{s,t}\Gamma\in SL(2,\R)/\Gamma: r\in\left[0,\frac{r_*}{2}\right], (s,t)\in \Omega^0\right\}$$

and 

$$K_-=\left\{u_rp_{s,t}\Gamma\in SL(2,\R)/\Gamma: r\in\left[-\frac{r_*}{2},0\right], (s,t)\in \Omega^0\right\}.$$

Notice that for $(s,t)\in\Omega^0\cap \Omega_{a_n}$ and $r\in[0,\frac{r_*}{2}]$, $W(s,t,A_n)\leq H(r,s,t,A_n)$ and that for $(s,t)\in\Omega^0\setminus\Omega_{a_n}$ and $r\in[-\frac{r_*}{2},0]$, then $W(s,t,A_n)\leq H(r,s,t,A_n)$ . Since both $W(s,t,A_n)$ and $H(r,s,t,A_n)$ are non-negative integer-valued functions, the following lemma follows:

\begin{Lemma}\label{lem up and down horocycle}
    Let $A_n$, $K_+$, and $K_-$ be defined as above. Then
    \begin{enumerate}
        \item For $(s,t)\in\Omega^0\cap \Omega_{a_n}$ and $r\in[0,\frac{r_*}{2}]$, 
        $$W^2(s,t,A_n)- W^2(s,t,A_n)\leq H^2(r,s,t,A_n)-H(r,s,t,A_n);$$
        \item For $(s,t)\in\Omega^0\setminus\Omega_{a_n}$ and $r\in[-\frac{r_*}{2},0]$,
        $$W^2(s,t,A_n)- W^2(s,t,A_n)\leq H^2(r,s,t,A_n)-H(r,s,t,A_n).$$
    \end{enumerate}
\end{Lemma}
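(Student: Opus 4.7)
The displayed inequality (correcting the obvious typo $W^2 - W^2$ to $W^2 - W$) is an immediate consequence of two facts stated in the paragraph preceding the lemma: (i) the pointwise bound $W(s,t,A_n) \leq H(r,s,t,A_n)$ holds in each of the two cases; and (ii) both $W$ and $H$ take values in $\mathbb{Z}_{\geq 0}$. My plan is to make this deduction explicit.

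The key observation is that the function $f(n) := n(n-1) = n^2 - n$ is non-decreasing on $\mathbb{Z}_{\geq 0}$: we have $f(0) = f(1) = 0$, and $f(n+1) - f(n) = 2n \geq 0$ for $n \geq 1$. Hence for non-negative integers $W \leq H$, $f(W) \leq f(H)$, which is exactly $W^2 - W \leq H^2 - H$. Applying this at each $(s,t,r)$ in the prescribed regions yields both claims of the lemma simultaneously.

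For the sake of completeness I would also expand on the pointwise claim $W \leq H$, which the preamble asserts without proof. In case (1), the map $v \mapsto u_r v$ is injective on $p_{s,t}\Lambda$ and sends $p_{s,t}\Lambda \cap A_n$ into $u_r p_{s,t}\Lambda \cap A_n$: the $x$-coordinate is preserved, and for the $y$-coordinate one has $v_2 - r v_1 \leq v_2 < c_n$ since $r, v_1 \geq 0$, while $v_2 - r v_1 > 0$ follows from the Athreya--Chaika lower bound on horocycle return times, which forces $v_2/v_1 \geq r_* > r_*/2 \geq r$ for any vector of $p_{s,t}\Lambda$ with $v_1 \in (0,1]$ and $v_2 > 0$. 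Case (2) is more subtle---the analogous shear may push points above the top of $A_n$---and the main obstacle is balancing the resulting gains (at least one, from the horizontal vector $(s,0) \in p_{s,t}\Lambda$, which lies outside $A_n$ but whose image under $u_r$ enters $A_n$ for $r < 0$ since $s > a_n$) against the losses, which one controls via the backward Athreya--Chaika bound together with the bounded-gap property of the height set $J$ (Proposition~\ref{Prop: bound on height gaps}).
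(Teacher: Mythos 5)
Your central observation --- that $f(n)=n(n-1)$ is nondecreasing on $\Z_{\ge 0}$, so the pointwise bounds $W\le H$ together with integer-valuedness of $W,H$ give $W^2-W\le H^2-H$ --- is exactly the deduction the paper leaves implicit when it says the lemma ``follows.'' Your proof of the case (1) pointwise bound is also correct: for $r\in[0,r_*/2]$ the shear $u_r$ preserves $x$-coordinates and lowers $y$-coordinates, and since $p_{s,t}\Gamma\in\mathcal{L}$, the return-time lower bound forces slope $\ge r_*>r$ for every $v\in p_{s,t}\Lambda$ with $v_1\in(0,1]$, $v_2>0$, so no point of $p_{s,t}\Lambda\cap A_n$ crosses $y=0$.

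Your case (2), however, is a sketch that does not close, and it reaches for the wrong ingredient. Proposition~\ref{Prop: bound on height gaps} bounds the gaps in $J$ from above, i.e.\ says the heights are dense enough; that puts no cap on how many lattice points an upward shear can push through the top of $A_n$, which is what you need. What does cap the loss is again the $r_*$-spacing: slopes of distinct vectors of $p_{s,t}\Lambda$ in $(0,1]\times\R_+$ differ by at least $r_*$ (consecutive return times are $\ge r_*$). The escaping region $A_n\setminus u_{-r}A_n=\{\,a_n\le x<1,\ c_n-|r|x\le y<c_n\,\}$ has slopes in an interval of width $c_n/a_n-(c_n-r_*/2)=\beta/a_n+r_*/2$, and this is $<r_*$ only when $\beta< a_n r_*/2$; so one must fix $\beta<r_*/2$ before letting $n\to\infty$ to guarantee at most one vector escapes, and that single loss is then offset by the single gained point $u_r(s,0)\in A_n$ (which uses $a_n<s<1$). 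Your ``balance the gains against the losses'' is the right heuristic, but without this explicit smallness condition on $\beta$ --- which the paper's preamble also omits --- the pointwise bound in case (2) is not established, and the bounded-gap property of $J$ is not the mechanism that would establish it.
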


Define $K=K_+\cup K_-$. Then We get the following bound:

\begin{Lemma}\label{Lem: thikening domain}
    For $A_n$ defined as in Corollary\ref{cor: first bound},
    Then, $$\int_{\Omega^0}W^2(s,t,A_n)-W(s,t,A_n) dm\leq \frac{2}{r_{*}}\int_{K}H^2(r,s,t,A_n)-H(r,s,t,A_n)drdm.$$
\end{Lemma}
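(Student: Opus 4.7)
The plan is to average the pointwise estimate of Lemma \ref{lem up and down horocycle} in the transversal $r$-direction, using Athreya--Chaika's positive lower bound $r_*$ on horocycle return times to identify the averaged integral with an integral over $K_+$ or $K_-$.

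First I would split $\Omega^0=(\Omega^0\cap\Omega_{a_n})\sqcup(\Omega^0\setminus\Omega_{a_n})$ and treat the two pieces separately. For each $(s,t)\in\Omega^0\cap\Omega_{a_n}$, part (1) of Lemma \ref{lem up and down horocycle} gives $W^2(s,t,A_n)-W(s,t,A_n)\leq H^2(r,s,t,A_n)-H(r,s,t,A_n)$ for \emph{every} $r\in[0,r_*/2]$. Averaging this pointwise bound over $r$ in an interval of length $r_*/2$ and integrating $(s,t)$ over $\Omega^0\cap\Omega_{a_n}$ yields, via Fubini,
\[
\int_{\Omega^0\cap\Omega_{a_n}}[W^2-W]\,dm\ \leq\ \frac{2}{r_*}\int_{\Omega^0\cap\Omega_{a_n}}\int_0^{r_*/2}[H^2-H]\,dr\,dm.
\]
The analogous argument with $r\in[-r_*/2,0]$ and part (2) of the Lemma handles the complementary piece $\Omega^0\setminus\Omega_{a_n}$.

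Next I would identify the right-hand sides with integrals over $K_+$ and $K_-$ under the parameterizations $(r,s,t)\mapsto u_rp_{s,t}\Gamma$. These maps are injective on $[0,r_*/2]\times\Omega^0$ and $[-r_*/2,0]\times\Omega^0$ respectively, because two triples with the same image would yield two visits of the same $u$-orbit to $\mathcal{L}$ separated by time at most $r_*/2<r_*$, contradicting the Athreya--Chaika bound. Under these identifications, the product measure $dr\,dm$ matches the measure appearing in the statement (it is the $SL(2,\R)$-invariant measure expressed via the Poincar\'e section $\mathcal{L}$). Since $H^2-H=H(H-1)\geq 0$, enlarging the $(s,t)$-domain from $\Omega^0\cap\Omega_{a_n}$ (respectively $\Omega^0\setminus\Omega_{a_n}$) to all of $\Omega^0$ only increases the right-hand side. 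Summing the two resulting inequalities, and noting that $K_+$ and $K_-$ overlap only on the codimension-one slice $\{r=0\}$ so that $\int_{K_+}+\int_{K_-}=\int_{K}$, gives exactly the claimed bound.

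I do not expect a real obstacle here; the substantive geometric input has already been packaged into Lemma \ref{lem up and down horocycle} and into the positivity of $r_*$. The only point that requires care is the book-keeping around the parameterizations of $K_\pm$ and the verification that the ambient measure on $SL(2,\R)/\Gamma$ pulls back to $dr\,dm$ under them, which is standard for Poincar\'e sections of the horocycle flow. Once this is in place, the argument is purely formal and amounts to averaging the pointwise estimate over a fixed transversal interval.
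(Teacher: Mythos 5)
Your argument is correct and follows essentially the same route as the paper: split $\Omega^0$ into $\Omega^0\cap\Omega_{a_n}$ and its complement, average the pointwise bound from Lemma \ref{lem up and down horocycle} over $r$ in an interval of length $r_*/2$, use nonnegativity of $H^2-H$ to enlarge the $(s,t)$-domain to all of $\Omega^0$, and add the two inequalities. The only difference is that you also spell out the injectivity of $(r,s,t)\mapsto u_rp_{s,t}\Gamma$ and the near-disjointness of $K_+$ and $K_-$, which the paper leaves implicit (and correctly so, since the inequality only needs $\int_{K_+}+\int_{K_-}\ge\int_K$ for a nonnegative integrand).
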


\begin{proof}
    Lemma \ref{lem up and down horocycle} $(1)$ implies that if $(s,t)\in\Omega^0\cap \Omega_{a_n}$ Then for all $r\in [0,\frac{r_*}{2}]$,
    $$W^2(s,t,A_n)- W^2(s,t,A_n)\leq H^2(r,s,t,A_n)-H(r,s,t,A_n).$$

    This means that the average of $H^2(r,s,t,A_n)-H(r,s,t,A_n)$ over $r\in [0,\frac{r_*}{2}]$ exceeds the value of $W^2(s,t,A_n)- W^2(s,t,A_n)$ for any $(s,t)\in\Omega^0\cap \Omega_{a_n}$. That means
    \begin{align*}
        \int_{\Omega^0\cap \Omega_{a_n}} W^2(s,t,A_n)- W^2(s,t,A_n) dm&\leq \frac{2}{r_*}\int_{\Omega^0\cap \Omega_{a_n}}\int_{0}^{\frac{r_*}{2}} H^2(r,s,t,A_n)-H(r,s,t,A_n) drdm\\
        &\leq \frac{2}{r_*}\int_{K_+} H^2(r,s,t,A_n)-H(r,s,t,A_n) drdm.
    \end{align*}

Similarly, we can show that
\begin{align*}
    \int_{\Omega^0\setminus \Omega_{a_n}} W^2(s,t,A_n)- W^2(s,t,A_n) dm&\leq \frac{2}{r_*}\int_{\Omega^0\cap \Omega_{a_n}}\int_{-\frac{r_*}{2}}^{0} H^2(r,s,t,A_n)-H(r,s,t,A_n) drdm\\
    &\leq \frac{2}{r_*}\int_{K_-} H^2(r,s,t,A_n)-H(r,s,t,A_n) drdm.
\end{align*}

We, therefore, get our desired inequality.

$$\int_{\Omega^0} W^2(s,t,A_n)- W^2(s,t,A_n) dm\leq \frac{2}{r_*}\int_{K} H^2(r,s,t,A_n)-H(r,s,t,A_n) drdm.$$

\end{proof}

Notice that $drdm$ is the Haar measure on $SL(2,\R)/\Gamma$. Since $H$ is a non-negative function function we have the following lemma.

\begin{Lemma} \label{Cor: Bound by bounding the whole space}
    With the notation as above,
$$\int_K H^2(r,s,t,A_n)-H(r,s,t,A_n) d\mu\leq \int_{SL(2,\R)/\Gamma} H^2(r,s,t,A_n)-H(r,s,t,A_n) d\mu.$$
\end{Lemma}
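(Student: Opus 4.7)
The key observation is that $H^2 - H = H(H-1)$ is pointwise non-negative. This is because $H(r,s,t,A_n) = \#(u_r p_{s,t}\Lambda \cap A_n)$ is a counting function and hence takes values in $\{0, 1, 2, \ldots\}$, so $H(H-1) \geq 0$ at every point. The remark preceding the lemma that ``$H$ is a non-negative function'' is really being invoked in this stronger form, because the integrand as written looks like a difference and is only obviously signed once one notices the factorization. Once this positivity is in hand, the whole lemma reduces to monotonicity of the integral under enlarging the domain.

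To carry this out cleanly, I would first verify that the parametrization $(r,s,t) \mapsto u_r p_{s,t}\Gamma$ identifies $\bigl(-\tfrac{r_*}{2}, \tfrac{r_*}{2}\bigr) \times \Omega^0$ with a measurable subset of $SL(2,\R)/\Gamma$ on which the product measure $dr\,dm$ agrees with the Haar measure $d\mu$. This agreement follows from the Poincar\'e section structure: since $\mathcal{L}$ is a cross section for the unstable horocycle flow $U$ with induced measure $m$, the disintegration of Haar measure along $U$-orbits is exactly $dr \otimes dm$ on any sufficiently small tubular neighborhood of $\mathcal{L}$. The injectivity of the parametrization on the time window $(-r_*/2, r_*/2)$ is guaranteed by the Athreya--Chaika lower bound $r_*$ on return times of $U$, which prevents any $U$-orbit from revisiting $\mathcal{L}$ inside this window. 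The two pieces $K_+$ and $K_-$ can only overlap on $\{r=0\}$, which has measure zero, so there is no double-counting.

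Given the measure identification, the stated inequality
\[
\int_K (H^2 - H)\,d\mu \;\leq\; \int_{SL(2,\R)/\Gamma}(H^2 - H)\,d\mu
\]
is precisely the monotonicity statement $\int_E f\,d\mu \leq \int f\,d\mu$ for a non-negative measurable $f$ and measurable $E \subseteq SL(2,\R)/\Gamma$. There is no analytic obstacle beyond the bookkeeping described above; the only ``hard part'' is conceptual: this lemma is the bridge from a second-moment bound localized near the Poincar\'e section to a Siegel--Veech-style second-moment estimate on the full homogeneous space, where the classical tools apply directly.
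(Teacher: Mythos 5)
Your argument is correct and matches the paper's implicit one-line justification: the lemma is just monotonicity of the integral for a non-negative integrand, combined with the identification of $dr\,dm$ with the restriction of Haar measure to $K$. You also rightly sharpen the paper's phrasing, which says only that ``$H$ is a non-negative function'': non-negativity of $H$ alone does not give $H^2-H\geq 0$, and the step really rests on the factorization $H^2-H=H(H-1)\geq 0$, which holds because $H$ is a non-negative \emph{integer-valued} counting function.
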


\begin{corollary} There exists a constant $D_1>0$ such that for $n$ large enough,
$$\int_{SL(2,\R)/\Gamma} H^2(r,s,t,A_n)-H(r,s,t,A_n) d\mu\leq  D_1\beta^2.$$
    
\end{corollary}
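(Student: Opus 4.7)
The plan is to reduce $\int(H^2-H)\,d\mu$ to a sum over $\Gamma$-orbits of pairs of linearly independent vectors in $\Lambda$, identify those orbits by their determinant via the $D_j$ lemma of \S\ref{Sec: Heights of Holonomy Vectors}, and then estimate the resulting geometric contribution carefully enough to extract the factor $(1-a_n)c_n=\beta$ twice. First, I would expand
\[
H^2(r,s,t,A_n)-H(r,s,t,A_n)=\sum_{\substack{v_1,v_2\in u_rp_{s,t}\Lambda\\ v_1\neq v_2}}\mathbb{1}_{A_n}(v_1)\,\mathbb{1}_{A_n}(v_2),
\]
and unfold the Haar integral over $SL(2,\R)/\Gamma$ against the sum over pairs to obtain
\[
\int_{SL(2,\R)/\Gamma}(H^2-H)\,d\mu=\sum_{[u_1,u_2]}\int_{SL(2,\R)}\mathbb{1}_{A_n}(gu_1)\,\mathbb{1}_{A_n}(gu_2)\,dg,
\]
the sum ranging over $\Gamma$-orbit representatives of pairs in $\Lambda\times\Lambda$ with $u_1\neq u_2$. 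Since distinct vectors in $\Lambda$ have distinct directions, every such pair is linearly independent, so its $SL(2,\R)$-stabilizer is trivial.

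Next, I would invoke the $D_j$ lemma: there are $\#D_j=2\phi(j)$ orbits with $|\det(u_1,u_2)|=j$, and by $SL(2,\R)$-invariance the inner integral depends only on $j$, so I can take the representative $(e_1,v_j)$ with $v_j=(0,j)$. Writing $g=\begin{bmatrix}a&b\\c&d\end{bmatrix}\in SL(2,\R)$ with Haar measure $dg=da\,db\,dc/|a|$ (taking $d=(1+bc)/a$), the substitution $(x_1,y_1,x_2)=(a,c,bj)$ has Jacobian $|j|$, so that $dg=\frac{dx_1\,dy_1\,dx_2}{|j|\,x_1}$ and
\[
\int_{SL(2,\R)}\mathbb{1}_{A_n}(ge_1)\,\mathbb{1}_{A_n}(gv_j)\,dg=\frac{V_n(j)}{|j|},\qquad V_n(j):=\int_{(A_n\times A_n)\cap\{\det=j\}}\frac{dx_1\,dy_1\,dx_2}{x_1},
\]
where the slice is parametrized by $(x_1,y_1,x_2)$ and $y_2=(j+x_2y_1)/x_1$.

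The main obstacle is establishing a uniform bound $V_n(j)\leq C\beta(1-a_n)$ for $j\in J_{c_n}$. The constraint $y_2\in(0,c_n)$ restricts $y_1$ to an interval of length $\min(c_n,(x_1c_n-j)/x_2)_+$, empty whenever $x_1c_n<j$. Splitting the $(x_1,x_2)\in[a_n,1)^2$ region on the threshold $(x_1c_n-j)/x_2\leq c_n$ (equivalently $j\geq(x_1-x_2)c_n$) produces two regimes: one where the $y_1$-length is $c_n$ on a subset of $(x_1,x_2)$-area at most $(1-a_n)^2/2$, and one where the $y_1$-length is $(x_1c_n-j)/x_2\leq c_n/x_2$. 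Using $\log(1/a_n)\leq 2(1-a_n)$ and $1/x_1\leq 2$ for $n$ large, both regimes contribute at most a constant multiple of $(1-a_n)^2 c_n=\beta(1-a_n)$, independently of $j$.

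Finally, I would sum using Lemma~\ref{cor: Asymtotics of Geometric Euler totitent function}(2):
\[
\int_{SL(2,\R)/\Gamma}(H^2-H)\,d\mu=\sum_{j\in J_{c_n}}\frac{2\phi(j)}{j}\,V_n(j)\leq 2C\beta(1-a_n)\sum_{j\in J_{c_n}}\frac{\phi(j)}{j}\leq D_1\beta^2
\]
for $n$ sufficiently large, since $\sum_{j\in J_{c_n}}\phi(j)/j\sim 2c_\omega c_n$ and $(1-a_n)c_n=\beta$. This produces the required constant $D_1$.
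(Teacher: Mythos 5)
Your proposal takes a genuinely different route from the paper. Where the paper decomposes $H=\sum_{i=1}^m H_i$ over the $\Gamma$-orbits $\Gamma v_i$ of $\Lambda$, expands $H^2-H=\sum_i(H_i^2-H_i)+2\sum_{i<j}H_iH_j$, and invokes Theorem~1.8 of Burrin--Fairchild as a black box to bound each integral by a constant times $\beta^2$, you instead reconstruct the second-moment bound from first principles: unfold $\int(H^2-H)\,d\mu$ into a sum over $\Gamma$-orbits of ordered pairs, classify the orbits by determinant via the $D_j$ lemma of \S\ref{Sec: Heights of Holonomy Vectors} (which, notably, the paper states but never actually uses in \S\ref{L^2 bounds}), compute the $SL(2,\R)$-integral for a representative pair as an explicit slice volume $V_n(j)/|j|$, bound $V_n(j)\lesssim\beta(1-a_n)$ uniformly, and sum against $\phi(j)/j$ via Lemma~\ref{cor: Asymtotics of Geometric Euler totitent function}. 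This is essentially a self-contained proof of the needed case of Burrin--Fairchild, and it is what the $D_j$ lemma seems to have been prepared for. The paper's citation approach is shorter and applies verbatim when $m>1$; yours makes the mechanism visible.

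Three points need tightening. First, the $D_j$ lemma only enumerates orbit classes of pairs $(u_1,u_2)$ with $u_1$ in the orbit $\Gamma e_1$, so writing $\sum_{[u_1,u_2]}=\sum_j 2\phi(j)/j\cdot V_n(j)$ tacitly assumes $\Lambda=\Gamma e_1$ (i.e.\ $m=1$). Under the standing one-cusp, $-\Id\in\Gamma$, disjoint-directions hypotheses this is in fact forced (all parabolic directions form one $\Gamma$-orbit, and $\Lambda$ has a unique vector in each ray), so the assumption is harmless here --- but the paper deliberately sets things up so the argument survives to the finite-cusp case $m>1$, and there your count would be incomplete without introducing an analogue of $D_j$ anchored at each $v_i$. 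Second, of the $2\phi(j)$ orbit classes with $|\det|=j$, $\phi(j)$ have $\det=+j$ and $\phi(j)$ have $\det=-j$; the $SL(2,\R)$ integral is an invariant of the \emph{signed} determinant, so they give $V_n(+j)$ and $V_n(-j)$, which need not coincide. Your displayed equality should be the inequality $\sum_j\phi(j)(V_n(j)+V_n(-j))/j$, and both slice volumes satisfy the same $\lesssim\beta(1-a_n)$ bound, so the conclusion is unaffected, but the ``$=$'' as written is imprecise. Third, the justification ``distinct vectors in $\Lambda$ have distinct directions, hence linearly independent'' overlooks the antipodal pairs $(v,-v)$ (both lie in $\Lambda$ since $-\Id\in\Gamma$); these are collinear, so one must note separately that they contribute zero because $A_n$ lies in a single open quadrant. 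With these repairs the argument is sound and yields the required $D_1$.
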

\begin{proof}
Notice that for any $A\subset\R^2$, $$H(r,s,t,A)=\sum_{i=1}^mH_{i}(r,s,t,A),$$ where $H_i(r,s,t,A)=\#(\Gamma v_i \cap A)$ where $v_i$ are such that $\Lambda=\bigsqcup_{i=1}^m\Gamma v_i$. Then we get that

$$H^2(r,s,t,A)-H(r,s,t,A)=\sum_{i=1}^m H^2_i(r,s,t,A)-H_i(r,s,t,A)+ 2\sum_{1\leq j<i\leq m}H_{i}(r,s,t,A)H_{j}(r,s,t,A).$$

Theorem 1.8 from Burrin-Fairchild \cite{BurFai}, implies that for each $i=1,...,k$, there exists a constant $d_i$ such that

$$\lim_{n\rightarrow\infty} \int_{SL(2,\R)/\Gamma}H^2_i(r,s,t,A_n)-H_i(r,s,t,A_n) d\mu \leq d_i\beta^2,$$

and that there exist constants $d_{i,j}$ for $1\leq i<j\leq m$ such that

    $$\lim_{n\rightarrow\infty} \int_{SL(2,\R)/\Gamma}H_i(r,s,t,A_n)H_j(r,s,t,A_n) d\mu \leq d_{i,j}\beta^2.$$
    
   Therefore,
$$\lim_{n\rightarrow\infty}\int_{SL(2,\R)/\Gamma} H^2(r,s,t,A_n)-H(r,s,t,A_n) d\mu\leq\left(\sum_{i=1}^k d_i+2\sum_{1\leq i< j\leq k}d_{i,j}\right)\beta^2.$$
\end{proof}

Therefore, we have the following bound:

\begin{corollary}
    With the notation as above, there exists $C>0$ such that for $n$ large enough,
    $$\int_{\Omega^0}W^2(s,t,A_n)-W(s,t,A_n) dm(s,t)\leq C\beta^2 $$
\end{corollary}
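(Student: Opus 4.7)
The plan is to simply chain together the three estimates that were just established in the section, so the corollary is essentially a bookkeeping step that packages the main second-moment estimate on $SL(2,\R)/\Gamma$ back down to the Poincar\'e section $\Omega^0$. I would proceed as follows.

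First, I would apply Lemma~\ref{Lem: thikening domain}, which thickens the domain of integration from the two-dimensional section $\Omega^0$ to the three-dimensional region $K = K_+ \cup K_-$ by averaging along short horocycle segments. This gives
\[
\int_{\Omega^0}\bigl(W^2(s,t,A_n) - W(s,t,A_n)\bigr)\, dm(s,t) \le \frac{2}{r_*}\int_{K}\bigl(H^2(r,s,t,A_n) - H(r,s,t,A_n)\bigr)\, dr\, dm.
\]
Next, since $drdm$ is the Haar measure on $SL(2,\R)/\Gamma$ (up to normalization) and the integrand $H^2 - H$ is non-negative (being $H(H-1)$ with $H$ a non-negative integer), Lemma~\ref{Cor: Bound by bounding the whole space} allows us to enlarge the domain from $K$ to all of $SL(2,\R)/\Gamma$ without decreasing the right-hand side.

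Finally, I would invoke the preceding corollary, which provides the decisive bound
\[
\int_{SL(2,\R)/\Gamma}\bigl(H^2(r,s,t,A_n) - H(r,s,t,A_n)\bigr)\, d\mu \le D_1 \beta^2
\]
for $n$ large enough, obtained from the Burrin--Fairchild second-moment formula applied to each Veech orbit $\Gamma v_i$ and the cross terms. Combining the three inequalities yields the desired bound with explicit constant $C = \tfrac{2 D_1}{r_*}$. No step here is the main obstacle: all the genuine work, namely the quantitative second-moment estimate for the Siegel--Veech transform on the full homogeneous space and the ``unfolding'' trick using the uniform lower bound $r_*$ on horocycle return times, has already been carried out in the preceding lemmas.
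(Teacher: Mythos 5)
Your proposal matches the paper's proof exactly: chain Lemma~\ref{Lem: thikening domain}, then Lemma~\ref{Cor: Bound by bounding the whole space}, then the Burrin--Fairchild corollary to get a bound of the form $\text{const}\cdot D_1\beta^2/r_*$. The only discrepancy is the explicit constant (you report $2D_1/r_*$, the paper states $4D_1/r_*$), which is immaterial since only the existence of some $C>0$ is claimed.
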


\begin{proof}
    The previous three results imply that for $n$ large enough, $$\int_{\Omega^0}W^2(s,t,A_n)-W(s,t,A_n) dm(s,t)\leq \frac{4}{r_*}D_1 \beta^2$$
\end{proof}

By setting $A_n=B_{2,n}$, we get a proof for Claim \ref{Claim: Big box} and therefore the proof of Theorem \ref{Thm: Main}.

\begin{bibdiv}
    \begin{biblist}

    \bib{Art}{book}{
   author={Artiles Calix, Albert Alejandro},
   title={Statistics of the Minimal Denominator Function and Short Holonomy
   Vectors of Translation Surfaces},
   note={Thesis (Ph.D.)--University of Washington},
   publisher={ProQuest LLC, Ann Arbor, MI},
   date={2024},
   pages={64},
   isbn={979-8384-09608-5},
}

    \bib{AthCha2}{article}{
   author={Athreya, J. S.},
   author={Chaika, J.},
   title={The distribution of gaps for saddle connection directions},
   journal={Geom. Funct. Anal.},
   volume={22},
   date={2012},
   number={6},
   pages={1491--1516},
   issn={1016-443X},
}

\bib{Ath}{article}{
   author={Athreya, Jayadev S.},
   title={Gap distributions and homogeneous dynamics},
   conference={
      title={Geometry, topology, and dynamics in negative curvature},
   },
   book={
      series={London Math. Soc. Lecture Note Ser.},
      volume={425},
      publisher={Cambridge Univ. Press, Cambridge},
   },
   isbn={978-1-107-52900-7},
   date={2016},
   pages={1--31},
}

\bib{AthCha}{article}{
   author={Athreya, J. S.},
   author={Chaika, J.},
   title={The distribution of gaps for saddle connection directions},
   journal={Geom. Funct. Anal.},
   volume={22},
   date={2012},
   number={6},
   pages={1491--1516},
   issn={1016-443X},
   }

\bib{AthChaLel}{article}{
   author={Athreya, Jayadev S.},
   author={Chaika, Jon},
   author={Leli\`evre, Samuel},
   title={The gap distribution of slopes on the golden L},
   conference={
      title={Recent trends in ergodic theory and dynamical systems},
   },
   book={
      series={Contemp. Math.},
      volume={631},
      publisher={Amer. Math. Soc., Providence, RI},
   },
   isbn={978-1-4704-0931-9},
   date={2015},
   pages={47--62},
   
}

\bib{AthChe}{article} {
    AUTHOR = {Athreya, Jayadev S.}
    AUTHOR={Cheung, Yitwah},
     TITLE = {A {P}oincar\'e{} section for the horocycle flow on the space
              of lattices},
   JOURNAL = {Int. Math. Res. Not. IMRN},
  FJOURNAL = {International Mathematics Research Notices. IMRN},
      YEAR = {2014},
    NUMBER = {10},
     PAGES = {2643--2690},
      ISSN = {1073-7928,1687-0247},
   MRCLASS = {37D40 (11H06)},
  MRNUMBER = {3214280},
}

\bib{BocCobZah}{article} {
    AUTHOR = {Boca, Florin P.}
    aUTHOR= {Cobeli, Cristian}
    AUTHOR={Zaharescu, Alexandru},
     TITLE = {A conjecture of {R}. {R}.\ {H}all on {F}arey points},
   JOURNAL = {J. Reine Angew. Math.},
  FJOURNAL = {Journal f\"ur die Reine und Angewandte Mathematik. [Crelle's
              Journal]},
    VOLUME = {535},
      YEAR = {2001},
     PAGES = {207--236},
      ISSN = {0075-4102,1435-5345},
   MRCLASS = {11N37 (11B57)},
  MRNUMBER = {1837099},
MRREVIEWER = {Dmitry\ Y.\ Kleinbock},
}

\bib{BurFai}{article}{
  title={Pairs in discrete lattice orbits with applications to Veech surfaces (with an appendix by Jon Chaika)},
  author={Claire Burrin}
  author={Samantha Fairchild},
  journal={Journal of the European Mathematical Society},
  year={2022},
  url={https://api.semanticscholar.org/CorpusID:254044416}
}

\bib{CheQua}{article}{
      title={BCZ map is weakly mixing}, 
      AUTHOR = {Cheung, Yitwah}
      author={ Quas, Anthony},
      year={2024},
      eprint={2403.14976},
      archivePrefix={arXiv},
      primaryClass={math.DS},
      url={https://arxiv.org/abs/2403.14976}, 
}

\bib{Mas}{article}{
   author={Masur, Howard},
   title={The growth rate of trajectories of a quadratic differential},
   journal={Ergodic Theory Dynam. Systems},
   volume={10},
   date={1990},
   number={1},
   pages={151--176},
   issn={0143-3857},
   review={\MR{1053805}},
}

\bib{OsmSouWan}{article}{
   author={Osman, Tariq},
   author={Southerland, Josh},
   author={Wang, Jane},
   title={An effective slope gap distribution for lattice surfaces},
   journal={Discrete Contin. Dyn. Syst.},
   volume={45},
   date={2025},
   number={12},
   pages={4998--5035},
   issn={1078-0947},
   review={\MR{4938650}},
   doi={10.3934/dcds.2025081},
}

\bib{San}{article}{
   author={Sanchez, Anthony},
   title={Gaps of saddle connection directions for some branched covers of
   tori},
   journal={Ergodic Theory Dynam. Systems},
   volume={42},
   date={2022},
   number={10},
   pages={3191--3245},
   issn={0143-3857},
}

\bib{Sie}{book}{
   author={Siegel, Carl Ludwig},
   title={Lectures on the geometry of numbers},
   note={Notes by B. Friedman;
   Rewritten by Komaravolu Chandrasekharan with the assistance of Rudolf
   Suter;
   With a preface by Chandrasekharan},
   publisher={Springer-Verlag, Berlin},
   date={1989},
   pages={x+160},
   isbn={3-540-50629-2},
   review={\MR{1020761}},
   doi={10.1007/978-3-662-08287-4},
}

\bib{Tah}{article}{
  title={The Boca-Cobeli-Zaharescu Map Analogue for the Hecke Triangle Groups \$G\_q\$},
  author={Diaaeldin Taha},
  journal={arXiv: Dynamical Systems},
  year={2018},
  url={https://api.semanticscholar.org/CorpusID:90262807}
}

\bib{UyaWor}{article}{
   author={Uyanik, Caglar},
   author={Work, Grace},
   title={The distribution of gaps for saddle connections on the octagon},
   journal={Int. Math. Res. Not. IMRN},
   date={2016},
   number={18},
   pages={5569--5602},
   issn={1073-7928},
}

\bib{Vee}{article}{
   author={Veech, William A.},
   title={Siegel measures},
   journal={Ann. of Math. (2)},
   volume={148},
   date={1998},
   number={3},
   pages={895--944},
   issn={0003-486X},
}

\bib{Zor}{article}{
   author={Zorich, Anton},
   title={Flat surfaces},
   conference={
      title={Frontiers in number theory, physics, and geometry. I},
   },
   book={
      publisher={Springer, Berlin},
   },
   isbn={978-3-540-23189-9},
   isbn={3-540-23189-7},
   date={2006},
   pages={437--583},
}

\end{biblist}
    \end{bibdiv}
    
\end{document}